\documentclass[11pt,reqno,a4paper]{amsart}
\usepackage[margin=.7in]{geometry}
\usepackage[usenames]{color}
\usepackage{amsmath,pdfsync,verbatim,graphicx,epstopdf,enumerate}
\usepackage[abbrev,nobysame]{amsrefs}
\usepackage[colorlinks=true]{hyperref}
\usepackage{cancel}
\usepackage[framemethod=tikz]{mdframed}
\hypersetup{allcolors=blue}
\allowdisplaybreaks
\numberwithin{equation}{section}

\newcommand{\lb}{\left(}

\newcommand{\rb}{\right)}
\newcommand{\PD}{\partial}

\newcommand{\Beq}{\begin{equation}}
	\newcommand{\Eeq}{\end{equation}}
\newcommand{\beq}{\begin{equation*}}
	\newcommand{\eeq}{\end{equation*}}
\newcommand{\bal}{\begin{align}}
	\newcommand{\eal}{\end{align}}

\usepackage{mathtools}

\usepackage[notref,notcite]{}

\newcommand{\bp}{\begin{prob}}
	\newcommand{\ep}{\end{prob}}
\newcommand{\bpr}{\begin{proof}}
	\newcommand{\epr}{\end{proof}}
\renewcommand{\o}{\omega}

\newcommand{\bel}[1]{\begin{equation}\label{#1}}
	\newcommand{\ee}{\end{equation}}

\newtheorem{theorem}{Theorem}[section]
\newtheorem{corollary}[theorem]{Corollary}

\newtheorem{lemma}[theorem]{Lemma}

\newtheorem{conjecture}{Conjecture}

\theoremstyle{definition}

\newcommand{\Rn}{\mathbb{R}^n}

\newcommand{\D}{\mathrm{d}}
\newcommand{\Lc}{\mathcal{L}}
\newcommand{\Rb}{\mathbb{R}}

\newcommand{\A}{\alpha}

\newcommand{\ve}{\varepsilon}

%%%%%%% Venky's shortcuts

\newcommand{\Bb}{\mathbb{B}}
\usepackage[usenames]{color}
\usepackage{amsmath,pdfsync,verbatim,graphicx,epstopdf,enumerate}

\newcommand{\wt}{\widetilde}

\newcommand{\Sb}{\mathbb{S}}
\newcommand{\Sn}{\mathbb{S}^{n-1}}

\renewcommand{\o}{\omega}

\title[Null space of the backprojection operator for spherical mean transform]{On the null space of the backprojection
operator and Rubin's conjecture for the spherical mean transform}
\author[Agrawal,~ Ambartsoumian, ~Krishnan,~ Singhal]{Divyansh Agrawal$^\ast$, Gaik Ambartsoumian$^\dagger$, Venkateswaran P. Krishnan$^\ast$ and Nisha Singhal$^\ast$}

\address {$^{\ast}$ Centre for Applicable Mathematics, Tata Institute of Fundamental Research, Bangalore, India
\newline
%\hspace{30mm}
E-mail:{\tt\  agrawald@tifrbng.res.in, vkrishnan@tifrbng.res.in, nisha2020@tifrbng.res.in}
\newline
Orcid:{\tt\ 0009-0003-5125-0640, 0000-0002-3430-0920, 0009-0006-3005-1986}}
\address{$^\dagger$ Department of Mathematics, The University of Texas at Arlington, Texas, USA
\newline
E-mail:{\tt \ gambarts@uta.edu}
\newline
Orcid:{\tt\ 0000-0002-1462-9964}}

\date{\today}

\begin{document}
\begin{abstract}
The spherical mean transform associates to a function $f$ its integral averages over all spheres. We consider the spherical mean transform for functions supported in the unit ball $\Bb$ in $\Rb^n$ for odd $n$, with the centers of integration spheres restricted to the unit sphere $\Sb^{n-1}$. In this setup, Rubin employed  properties of Erd\'elyi-Kober fractional integrals and analytic continuation to re-derive the explicit inversion formulas proved earlier by Finch, Patch, and Rakesh using wave equation techniques. As part of his work, Rubin stated a conjecture relating spherical mean transform, its associated backprojection operator and the Riesz potential. Furthermore, he pointed to the necessity of a detailed analysis of injectivity of the backprojection operator as a crucial step toward the resolution of his conjecture.
This article addresses both questions posed by Rubin by providing a characterization of the null space of the backprojection operator, and disproving the conjecture through the construction of an explicit counterexample. Crucial to the proofs is the range characterization for the spherical mean transform in odd dimensions derived recently by the authors.

\end{abstract}

\subjclass[2020]{44A12, 44A15, 44A20, 45Q05, 33C10}
\keywords{Spherical mean transform; backprojection operator; null space; range characterization}

\maketitle

\section{Introduction and statements of main results}

The purpose of this article is to address two questions posed by Rubin in his paper \cite{R} for the spherical mean transform (SMT) in odd dimensions. The first is a conjecture that he suggested about a relation between SMT and a Riesz potential, and the second question (which is related to his conjecture) is about the injectivity of the associated backprojection operator. 
Our first result is that the backprojection operator is not injective; in fact, we give a complete characterization of the null space of the backprojection operator.  Our second result is that the conjecture stated in \cite{R} is not true. We show this through an explicit counterexample. 

As in Rubin's work, our results specifically deal with the case of odd dimensions. In the recent work \cite{agrawal2023simple}, we derived a range characterization for SMT in odd dimensions, which is simpler than what previously existed in the literature. 
Our range characterization plays a pivotal role in proving the results described above.

The spherical mean transform maps a function to its integral averages over spheres with centers in $\Rn$. A formal dimension count shows that SMT depends on $n+1$ variables, while the function itself depends on  $n$ variables. One can make the inversion of SMT a formally determined problem
by restricting the centers of the spheres of integration to a hypersurface.  Motivated by potential applications in tomography, a common approach is to consider this hypersurface to be $\Sb^{n-1}$, and functions supported in $\Bb$, the unit ball in $\Rb^n$. 

For $f \in C_c^\infty (\Bb)$, SMT is defined as 
\begin{equation*}
\mathcal{M} f (p, t) = \frac{1}{\o_n} \int\limits_{\Sn} f(p+t\theta) \, \D S(\theta),
\end{equation*}
where $\o_n$ denotes the surface area of $\Sn$ and $\D S$ denotes the surface measure. 
Note that due to the support restriction on $f$, $\mathcal{M} f (\cdot, t) = 0$ for $t \geq 2$. In fact, $\mathcal{M}f(\cdot, t)=0$ for $t$ close enough to $0$ and $2$. Thus, $\mathcal{M}: C_c^\infty(\Bb) \to C_c^\infty (\Sn \times (0,2))$.

The spherical mean transform arises naturally in several tomographic applications, including, thermoacoustic and photoacoustic tomography, radar and sonar, as well as ultrasound reflectivity imaging. For this reason, the study of SMT in the context of inverse problems attracted significant attention in the recent past, including, inversion formulas, range characterization and reconstruction algorithms; see for instance \cites{Ambartsoumian2018, Ambartsoumian-Zarrad-Lewis, ambartsoumian2015inversion, ambartsoumian2014exterior, ambartsoumian2007thermoacoustic, AER, aramyan2020recovering, Finch-Haltmeir-Rakesh_even-inversion, Finch-P-R, K, nguyen2009family, norton1980reconstruction, norton1981ultrasonic, R, xu2002time, AKK, Agranovsky-Kuchment-Quinto, Salman_Article,  AN, ref:AmbKuch-range, finch2006range, LVN, agrawal2023simple}. 

Next, we present the explicit inversion formula for the SMT in odd dimensions derived by Finch, Patch and Rakesh in \cite{Finch-P-R}, and (using a simpler argument) by Rubin in \cite{R}. 

\begin{theorem}\cite[Theorem~3]{Finch-P-R}, \cite[Theorem 3.2]{R}
    Let $n\geq 3$ be odd, $f$ be a smooth function supported in  $\overline\Bb$, and $\mathcal{M}f(p,t)$ be known for all $p \in \Sn$ and all $t \in (0,2)$. Then 
    \begin{equation} \label{inv}
        \begin{split}
            f(x) &= c(n) \Delta \int\limits_{\Sn} \lb D^{n-3} t^{n-2} \mathcal{M} f(\theta,t) \rb \Big|_{t=|x-\theta|} \, \D S(\theta), \quad x \in \Bb \\
            &= c(n) \Delta \Big{(} P \lb D^{n-3}t^{n-2} \mathcal{M}f \rb(x)\Big{)},
        \end{split}
    \end{equation}
    where $D = \frac{1}{t} \frac{\D}{\D t}$, and
    \begin{align*}
        (PF)(x) = \frac{1}{\o_n} \int\limits_{\Sn} F(\theta, |x-\theta|) \D S(\theta),\quad x\in\mathbb{B}.
    \end{align*}
\end{theorem}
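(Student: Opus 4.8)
The plan is to deduce \eqref{inv} from the wave equation, following \cite{Finch-P-R}. By linearity it suffices to treat $f\in C_c^\infty(\Bb)$; for such $f$ the function $\mathcal{M}f(\theta,\cdot)$ is smooth with support in a compact subinterval of $(0,2)$, so every filtered quantity $D^{k}(t^{j}\mathcal{M}f)$ below is smooth with compact $t$-support, the operator $P$ integrates over $\theta\in\Sn$ with $|x-\theta|$ bounded away from $0$, and all interchanges of integration and differentiation are legitimate. I would first isolate the only use of the Laplacian: with $Ng(x):=\tfrac{-1}{(n-2)\o_n}\int_{\Rn}|x-y|^{2-n}g(y)\,\D y$ the Newtonian potential, so that $\Delta Ng=g$, the identity \eqref{inv} is equivalent to the assertion that, on $\Bb$, the filtered backprojection $P\big(D^{n-3}(t^{n-2}\mathcal{M}f)\big)$ coincides with $c(n)^{-1}Nf$ up to a function harmonic on $\Bb$; thus the whole content is to identify this filtered backprojection.

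\textbf{Wave equation reformulation.} Extending $\mathcal{M}f(x,t)=\tfrac1{\o_n}\int_{\Sn}f(x+t\theta)\,\D S(\theta)$ to all $x\in\Rn$, the classical odd-dimensional Poisson--Kirchhoff formula says that $V(x,t):=D^{(n-3)/2}\big(t^{n-2}\mathcal{M}f(x,t)\big)$ solves $\partial_t^2V=\Delta V$ on $\Rn\times\R$ with Cauchy data $V(\cdot,0)=0$ and $\partial_tV(\cdot,0)=\gamma_n f$, where $\gamma_n=1\cdot3\cdots(n-2)$; equivalently $V(\cdot,s)=\gamma_n\,E_{+}(\cdot,s)\ast f$ in the spatial variable, $E_{+}$ being the forward fundamental solution of $\partial_t^2-\Delta$. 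For odd $n\geq3$ one has strong Huygens: $E_{+}(z,s)$ is supported on the light cone $\{|z|=s\}$ and equals $b_n\,D_s^{(n-3)/2}\big(s^{-1}\delta(s-|z|)\big)$ for some constant $b_n$, whence $V(x,\cdot)$ is compactly supported in $t$ for $|x|<1$; note also $D^{n-3}\big(t^{n-2}\mathcal{M}f\big)=D^{(n-3)/2}V$, a further $D$-filter of a wave solution.

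\textbf{Kirchhoff-type recovery.} The backprojection in \eqref{inv} evaluates the above spacetime object along $\theta\mapsto(\theta,|x-\theta|)$, i.e.\ on the intersection of the light cone through $(x,0)$ with the cylinder $\Sn\times\R$; this is exactly the configuration in which Green's identity for $\partial_t^2-\Delta$, applied to $w:=\partial_tV$ (Cauchy data $(\gamma_n f,0)$) and to $(y,t)\mapsto E_{+}(x-y,-t)$ on $\Bb\times\R$, localizes. The boundary terms at $t=\pm\infty$ vanish by the compact $t$-support; the remaining boundary integral over $\Sn\times\R$ has its time variable integrated out against the $\delta$-type cone measure of $E_{+}$, leaving precisely an integral over $\Sn$ of the stated filtered spherical mean; and the single Laplacian in \eqref{inv} is accounted for by the purely odd-dimensional identity $\int_{\R}E_{+}(z,s)\,\D s=\tfrac{1}{(n-2)\o_n}|z|^{2-n}$, obtained by integrating $\partial_s^2E_{+}-\Delta E_{+}=\delta$ in $s$ and using the compact $s$-support of $E_{+}(z,\cdot)$, which is what makes the time-collapsed boundary integral a constant multiple of the Newtonian potential of $f$ modulo a harmonic function on $\Bb$. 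Matching constants along the way yields the explicit $c(n)$; alternatively $c(n)$ is fixed at the very end by testing \eqref{inv} on any single convenient nonzero $f$.

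\textbf{The main obstacle} is the bookkeeping in the last step: verifying that the accumulated filters are exactly $D^{n-3}$, that the spurious normal-derivative boundary terms reorganize so that a single $\Delta$ survives, and that the scalar $c(n)$ emerges correctly. Everything else — Poisson--Kirchhoff, strong Huygens, Green's identity, and the collapse of $\int E_{+}\,\D s$ to the Newtonian kernel — is robust and insensitive to the dimension beyond the parity of $n$. A genuinely different proof, due to Rubin \cite{R}, avoids the wave equation altogether: decomposing $f$ and $\mathcal{M}f$ into spherical harmonics reduces \eqref{inv} to a one-dimensional identity on each harmonic component, which then follows from composition formulas for Erd\'elyi--Kober fractional integrals together with analytic continuation in the order of the integrals.
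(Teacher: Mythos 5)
This theorem is not proved in the paper at all: it is quoted verbatim from Finch--Patch--Rakesh \cite{Finch-P-R} and Rubin \cite{R}, and the authors explicitly decline to track the constant $c(n)$ because the formula is only used as a black box (in the proof of Theorem \ref{Thm:RubinConj}). So there is no in-paper argument to compare yours against; your sketch has to stand on its own, and as written it has a genuine gap at exactly the point you label ``bookkeeping.''

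The problem is the Green's identity step. Applying Green's identity for $\partial_t^2-\Delta$ on $\Bb\times\R$ to $w=\partial_t V$ and $(y,t)\mapsto E_+(x-y,-t)$ gives
\begin{equation*}
\gamma_n f(x)=w(x,0)=\int\limits_{\R}\int\limits_{\Sn}\big(E_+(x-\theta,-t)\,\partial_\nu w(\theta,t)-w(\theta,t)\,\partial_\nu\big[E_+(x-\cdot,-t)\big](\theta)\big)\,\D S(\theta)\,\D t ,
\end{equation*}
and the first boundary term involves the \emph{normal derivative} of the wave solution on the cylinder $\Sn\times\R$. That quantity is not determined pointwise by the data $\mathcal{M}f|_{\Sn\times(0,2)}$, and it does not ``reorganize'' by any generic mechanism; eliminating (or trading away) precisely this Neumann trace is the entire content of the Finch--Patch--Rakesh proof, which they accomplish through bespoke trace identities for pairs of wave solutions, not through the fundamental-solution Green's identity. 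Moreover, your two mechanisms are mutually inconsistent: if the Green's identity above worked, its left-hand side would already be $\gamma_n f(x)$ with no Laplacian, whereas the identity $\int_\R E_+(z,s)\,\D s=\tfrac{1}{(n-2)\o_n}|z|^{2-n}$ (which is correct) only enters if the time variable of $E_+$ is integrated \emph{freely} over $\R$ --- but in the Green's identity it is pinned to the light cone $|t|=|x-\theta|$ by the $\delta$-type support of $E_+$. The structure $f=c(n)\Delta\circ P(\cdots)$ actually arises from a direct computation of the Schwartz kernel of $P(D^{n-3}t^{n-2}\mathcal{M}(\cdot))$ as an integral operator $\int_{\Bb}K(x,y)f(y)\,\D y$ followed by the verification that $\Delta_xK(x,y)=c(n)^{-1}\delta(x-y)$ modulo a function harmonic in $x$ on $\Bb$; none of your steps performs that verification. (Two smaller points: the reduction from $f$ supported in $\overline\Bb$ to $f\in C_c^\infty(\Bb)$ is a density/continuity argument, not ``linearity''; and your correct observations --- Poisson--Kirchhoff, strong Huygens, the collapse of $\int E_+\,\D s$ to the Newtonian kernel --- are genuinely the right ingredients, so the sketch is salvageable, but only by replacing the Green's identity step with the kernel computation or with the trace identities of \cite{Finch-P-R}.)
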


The notation in the above statement are slightly different from those of \cites{Finch-P-R,R}. Furthermore, we will not pay much attention to the constant $c(n)$, as this does not enter into the analysis that follows. As already mentioned, Rubin in \cite{R} gave an alternate and much simpler proof than that of \cite{Finch-P-R}, using Erd\'elyi-Kober fractional integrals and analytic continuation. He then stated the following conjecture. More precisely, the necessity part was proved in \cite{R} and was required for his proof of the above inversion formula, while the sufficiency part was stated as a conjecture. 

\begin{conjecture}\cite{R}\label{C1}
    A function $g \in C_c^\infty(\Sn \times (0,2))$ belongs to the range of the operator $f \mapsto \mathcal{M}f $ iff $P\lb D^{n-3} t^{n-2} g \rb$ belongs to the range $I^2[C_c^\infty(\Bb)]$, where $I^2$ is the Riesz potential defined by 
    \[
    (I^2f)(x) = \frac{\Gamma\lb \frac{n-2}{2}\rb}{4 \pi^{n/2}} \int\limits_{\Bb} \frac{f(y)}{|x-y|^{n-2}} \, \D y.
    \]
\end{conjecture}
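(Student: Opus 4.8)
The plan is to prove the two implications separately, treating the necessity direction as essentially known and concentrating the effort on sufficiency. Throughout, write $\Bc g := P\lb D^{n-3} t^{n-2} g \rb$ for the composed backprojection operator appearing in the inversion formula \eqref{inv}, and recall that with the given normalization $I^2$ is the Newtonian potential, the fundamental solution of $-\Delta$, so that $\Delta I^2 h = -h$ in $\Bb$ for every $h \in C_c^\infty(\Bb)$. For \emph{necessity}, the content is the precise identity
\beq
\Bc(\mathcal{M}f) = -\tfrac{1}{c(n)}\, I^2 f, \qquad f \in C_c^\infty(\Bb),
\eeq
which is consistent with \eqref{inv} (apply $\Delta$ and use $\Delta I^2 = -\mathrm{Id}$) and which Rubin established en route to the inversion formula. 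In particular $\Bc g \in I^2[C_c^\infty(\Bb)]$ whenever $g = \mathcal{M}f$. I would simply invoke this identity in exactly this form.

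For \emph{sufficiency}, suppose $g \in C_c^\infty(\Sn\times(0,2))$ satisfies $\Bc g = I^2 h$ for some $h \in C_c^\infty(\Bb)$. The candidate preimage is dictated by \eqref{inv}: set $f := -c(n)\, h \in C_c^\infty(\Bb)$, so that $I^2 h = -\tfrac{1}{c(n)} I^2 f$ and hence $\Bc g = -\tfrac{1}{c(n)} I^2 f$. Applying the necessity identity to this $f$ gives $\Bc(\mathcal{M}f) = -\tfrac{1}{c(n)} I^2 f$ as well, so by linearity of $\Bc$ the defect $\phi := g - \mathcal{M}f \in C_c^\infty(\Sn\times(0,2))$ satisfies
\beq
\Bc \phi = P\lb D^{n-3} t^{n-2} \phi \rb = 0 ,
\eeq
i.e. $\phi$ lies in the null space of the backprojection operator. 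If $\phi = 0$ then $g = \mathcal{M}f$ and we are done; more generally, it suffices to show $\ker \Bc \subseteq \mathrm{Range}(\mathcal{M})$, since then $g = \mathcal{M}f + \phi \in \mathrm{Range}(\mathcal{M})$.

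Thus the entire weight of the conjecture collapses onto a single question: \textbf{is $\Bc$ injective on $C_c^\infty(\Sn\times(0,2))$}, or at least is $\ker\Bc \subseteq \mathrm{Range}(\mathcal{M})$? This is exactly the injectivity point Rubin flagged as crucial. To attack it I would analyze $\ker\Bc$ directly: the equation $P\lb D^{n-3} t^{n-2}\phi\rb(x) = 0$ for all $x\in\Bb$ is a backprojection identity with the geometric constraint $t = |x-\theta|$, and I would expand it into spherical harmonics on $\Sn$ together with a radial transform in $|x|$, using the explicit kernel of $P$ and the structure of the weight $D^{n-3} t^{n-2}$. The decisive tool should be the range characterization for $\mathcal{M}$ from \cite{agrawal2023simple}: comparing the consistency conditions cutting out $\mathrm{Range}(\mathcal{M})$ with the conditions defining $\ker\Bc$ should either force $\ker\Bc = 0$, or at minimum place every null-space element inside $\mathrm{Range}(\mathcal{M})$, which is all that sufficiency requires.

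The main obstacle is precisely this injectivity analysis of $\Bc$. Because $\Bc$ couples a spherical backprojection with the singular radial weight $D^{n-3} t^{n-2}$, its null space is not accessible by elementary Fourier methods; one genuinely needs the fine range description of \cite{agrawal2023simple} to determine whether the defect $\phi$ must vanish. I expect this step to be delicate, and the disposition of $\ker\Bc$ relative to $\mathrm{Range}(\mathcal{M})$ is exactly what decides whether the biconditional can be closed.
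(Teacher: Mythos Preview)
Your reduction is correct and tracks exactly the crux the paper isolates: sufficiency in Conjecture~\ref{C1} is equivalent to the inclusion $\ker\Bc \subseteq \mathrm{Range}(\mathcal{M})$, where $\Bc = P D^{n-3} t^{n-2}$. However, the conjecture is \emph{false}, and the paper's treatment of this statement is a \emph{disproof} (Theorem~\ref{Thm:RubinConj}), not a proof. Note first that your two alternatives collapse into one: if $\phi \in \ker\Bc \cap \mathrm{Range}(\mathcal{M})$, say $\phi = \mathcal{M}f$, then the inversion formula~\eqref{inv} gives $f = c(n)\Delta\Bc\phi = 0$, hence $\phi = 0$. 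Thus $\ker\Bc \subseteq \mathrm{Range}(\mathcal{M})$ holds if and only if $\Bc$ is injective---and it is not.

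The paper exhibits a nonzero element of $\ker\Bc$ explicitly. Take any nontrivial radial $\wt g \in \mathrm{Range}(\mathcal{M})$; by Theorem~\ref{RangeCha-General} one has $t^{n-2}\wt g \in \ker(PD^{n-2})$, hence $D(t^{n-2}\wt g) \in \ker(PD^{n-3})$. Writing $D(t^{n-2}\wt g) = t^{n-2} g$ with $g = (n-2)t^{-2}\wt g + t^{-1}\wt g\,'$, one obtains $g \in C_c^\infty((0,2))$, $g \not\equiv 0$, and $\Bc g = 0$; by the previous paragraph $g \notin \mathrm{Range}(\mathcal{M})$. Your proposed strategy of comparing $\ker\Bc$ with the range conditions of \cite{agrawal2023simple} is in fact what the paper does, but the outcome is the opposite of what you anticipate: it yields the characterization $\ker(PD^{n-3}) = \mathrm{Range}(D\, t^{n-2}\mathcal{M})$, which is strictly larger than $t^{n-2}\mathrm{Range}(\mathcal{M})$, and that discrepancy is precisely why sufficiency fails.
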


Rubin suggested that analyzing the injectivity of the backprojection operator $P$ may be a way to resolve his conjecture. In this paper, we characterize the null space of the backprojection operator $P$, and show that the sufficiency part of Conjecture \ref{C1} is not valid by constructing an explicit counterexample. 
Here are the main results of our paper.

\begin{theorem}\label{Kernel-backprojection} A function $g\in C_c^{\infty}(\Sb^{n-1}\times (0,2))$ belongs to $\operatorname{Ker}(P)$ if and only if $g\in \operatorname{Range}(D^{n-2}t^{n-2}\mathcal{M})$.
\end{theorem}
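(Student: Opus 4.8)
\emph{Strategy.} Write $\mathcal{R}:=\operatorname{Range}\!\big(D^{n-3}t^{n-2}\mathcal{M}\big)$. Since $D^{n-2}t^{n-2}\mathcal{M}=D\circ\big(D^{n-3}t^{n-2}\mathcal{M}\big)$, the assertion to be proved is exactly $\operatorname{Ker}(P)=D[\mathcal{R}]$, and I will establish the two inclusions separately. The ingredients are the inversion formula \eqref{inv} — so that $f=c(n)\,\Delta_x(P\Phi)$ on $\Bb$ whenever $\Phi=D^{n-3}t^{n-2}\mathcal{M}f$ — and the range characterization of \cite{agrawal2023simple} for $\operatorname{Range}(\mathcal{M})$ (hence for $\mathcal{R}=D^{n-3}t^{n-2}\operatorname{Range}(\mathcal{M})$). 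Throughout I use the elementary intertwining $\Delta_x\big(Ph\big)(x)=P\big(L_t h\big)(x)$ with $L_t:=\partial_t^2+\frac{n-1}{t}\partial_t$ (because $\Delta_x\big[h(\theta,|x-\theta|)\big]=(L_th)(\theta,|x-\theta|)$), together with the identity $L_t h=n\,Dh+t^2D^2h$, obtained by writing $\partial_t h=t\,Dh$.

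\emph{The inclusion $D[\mathcal{R}]\subseteq\operatorname{Ker}(P)$.} Let $\Phi=D^{n-3}t^{n-2}\mathcal{M}f$ with $f\in C_c^\infty(\Bb)$; one must show $P(D\Phi)\equiv 0$ on $\Bb$. The key tool is the explicit one-dimensional form of the backprojection: expanding in spherical harmonics in the center variable and substituting $\rho=|x-\theta|$ (the computation at the heart of Rubin's Erd\'elyi--Kober approach), $(Ph)(x)$ for $r=|x|<1$ becomes, mode by mode, an integral over $\rho\in(1-r,1+r)$ of the radial profile of $h$ against the polynomial weight $\big(\tfrac{(\rho^2-(1-r)^2)((1+r)^2-\rho^2)}{4r^2}\big)^{(n-3)/2}$ (times a Gegenbauer factor for the higher modes and an extra $\rho$). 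Since $\rho\,(D\Phi)(\theta,\rho)=\partial_\rho\big(D^{n-3}[\rho^{n-2}\mathcal{M}f]\big)(\theta,\rho)$, one can integrate by parts $n-2$ times to move all $\rho$-derivatives onto this weight. The boundary terms generated by the first $(n-3)/2$ integrations by parts vanish because the weight and its derivatives of order $<(n-3)/2$ vanish at $\rho=1\pm r$; the remaining boundary terms and the leftover interior integral involve $D^{j}[\rho^{n-2}\mathcal{M}f]$ ($0\le j\le (n-3)/2$) evaluated at $\rho=1-r$ and $\rho=1+r$ (respectively $\rho^{n-2}\mathcal{M}f$ integrated over $(1-r,1+r)$), and these cancel because of a reflection symmetry of $\rho^{n-2}\mathcal{M}f$ about $\rho=1$ that is forced, once $|x|<1$, by $f$ vanishing near $\Sn$. (Equivalently: $D\Phi$ equals, up to a nonzero dimensional constant, $D^{(n-1)/2}w$ where $w$ solves $\partial_t^2w=\Delta_x w$ with $w|_{t=0}=0$, $\partial_tw|_{t=0}=f$; then $P(D\Phi)(x)$ is the integral of a $t$-derivative of $w$ over the forward light cone of the interior point $(x,0)$, which vanishes by Huygens' principle together with a null-flux identity.) I expect this inclusion — and specifically the cancellation of the surviving boundary terms — to be the technical core of the argument, and the only place where the hypothesis $\operatorname{supp}f\ssubset\Bb$ is used in an essential way.

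\emph{The inclusion $\operatorname{Ker}(P)\subseteq D[\mathcal{R}]$.} Conversely, let $g\in C_c^\infty(\Sn\times(0,2))$ with $Pg\equiv 0$ on $\Bb$, and put $\Phi(\theta,t):=\int_0^t s\,g(\theta,s)\,ds$, the unique solution of $D\Phi=g$ that vanishes for small $t$. From $L_t\Phi=n\,g+t\,\partial_t g$ one gets $\Delta_x(P\Phi)(x)=P\big(t\,\partial_t g\big)(x)$ on $\Bb$; feeding $Pg=0$ into the one-dimensional reduction of the previous paragraph (differentiating $Pg(x)=0$ at $x=0$ in each direction, and passing to the limit $r\to 1^-$ inside the reduced integral) yields the vanishing of precisely the weighted moments of $g$ that guarantee, first, $\int_0^2 s\,g(\theta,s)\,ds=0$ for every $\theta$, so that $\Phi\in C_c^\infty(\Sn\times(0,2))$, and second, that $f:=c(n)\,\Delta_x\big(\wt P\,\Phi\big)$ — with $\wt P$ the extension of $P$ to all of $\Rn$ — is supported in $\overline\Bb$, whence $f\in C_c^\infty(\Bb)$ (the support of $g$ away from $t=0$ keeps $\operatorname{supp}f$ away from $\Sn$). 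Running the inversion formula backwards and invoking the range characterization of \cite{agrawal2023simple} then identifies $D^{n-3}t^{n-2}\mathcal{M}f$ with $\Phi$, so $g=D\Phi=D^{n-2}t^{n-2}\mathcal{M}f\in D[\mathcal{R}]$. Compared with the forward inclusion this direction is soft: it amounts to reading off the right moment conditions from $Pg=0$ and appealing to the range theorem.
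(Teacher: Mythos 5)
Your overall blueprint (reduce $Pg=0$ to one–dimensional moment conditions, then invoke the range characterization) overlaps with the paper's, but both of your inclusions have genuine gaps as written. For the inclusion $D[\mathcal{R}]\subseteq\operatorname{Ker}(P)$, the cancellation of boundary terms is \emph{not} driven by "a reflection symmetry of $\rho^{n-2}\mathcal{M}f$ about $\rho=1$" — no such symmetry of $t^{n-2}\mathcal{M}f$ itself holds. What is true (and what the paper proves) is that after integrating by parts against the weight $B^{k}=\bigl(4r^2-(1+r^2-u^2)^2\bigr)^{k}$ one arrives at
$P(D^{n-2}h)(x)=\frac{C}{|x|^{2k+1}}\bigl([\Lc_k h](1+|x|)-[\Lc_k h](1-|x|)\bigr)$
(and its mode-$m$ analogue with $\Lc_{m+k}$ acting on $\phi_{ml}$, where $h_{ml}=D^m\phi_{ml}$), and the two evaluations cancel precisely because the range conditions \eqref{GRC0}--\eqref{GRC} of Theorem \ref{RangeCharacterization-General} assert the symmetry of $\Lc_{m+k}\phi_{ml}$ about $t=1$ — a differential-operator identity, not a pointwise symmetry of the data. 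So you need the full range characterization here, not just $\operatorname{supp}f\ssubset\Bb$; the paper packages this once and for all as Theorem \ref{RangeCha-General}, after which the forward inclusion is a one-line consequence.

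For the reverse inclusion, two problems. First, you only extract the single moment $\int_0^2 s\,g(\theta,s)\,\D s=0$, but to write $g=D\Phi$ with $\Phi\in\operatorname{Range}(D^{n-3}t^{n-2}\mathcal{M})$ you need, in each spherical-harmonic mode, the vanishing of \emph{all} odd moments $\int_0^2 u^{2j+1}g_{ml}(u)\,\D u$ for $0\le j\le 2(m+k)$ (the paper's Lemmas \ref{FTC} and \ref{FTC2}), which is what lets you antidifferentiate $2(m+k)+1$ times within $C_c^\infty((0,2))$. Second, the step "define $f:=c(n)\Delta_x(\wt P\,\Phi)$ and run the inversion formula backwards" is circular: formula \eqref{inv} is only a \emph{left} inverse on $\operatorname{Range}(\mathcal{M})$, so applying it to a $\Phi$ not yet known to lie in $\mathcal{R}$ and concluding $D^{n-3}t^{n-2}\mathcal{M}f=\Phi$ assumes exactly what must be proved. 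The paper avoids this by first showing $g=D^{n-2}h$ for some $h\in C_c^\infty$ via the moment lemmas, then observing $P\bigl(D^{n-2}t^{n-2}(t^{-(n-2)}h)\bigr)=Pg=0$ and applying the already-established equivalence of Theorem \ref{RangeCha-General} to conclude $t^{-(n-2)}h\in\operatorname{Range}(\mathcal{M})$. Your argument would need to be restructured along these lines to close.
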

\begin{theorem}\label{Thm:RubinConj}
    There exists a function $g\in C_c^{\infty}(\Sb^{n-1}\times (0,2))$ such that $P(D^{n-3}t^{n-2}g)=0$, but $g\notin \operatorname{Range}(\mathcal{M})$. Consequently, the sufficiency part of Conjecture \ref{C1}  is not valid.
\end{theorem}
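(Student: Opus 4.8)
The plan is to extract the counterexample straight from Theorem~\ref{Kernel-backprojection}, exploiting the triviality that $0 = I^2 0 \in I^2[C_c^\infty(\Bb)]$. Thus it suffices to exhibit $g \in C_c^\infty(\Sb^{n-1}\times(0,2))$ with $P(D^{n-3}t^{n-2}g) = 0$ but $g \notin \operatorname{Range}(\mathcal{M})$. The crucial observation is the one-derivative discrepancy between the two ingredients: Conjecture~\ref{C1} and the inversion formula~\eqref{inv} involve $D^{n-3}t^{n-2}\mathcal{M}$, whereas Theorem~\ref{Kernel-backprojection} identifies $\operatorname{Ker}(P)$ with $\operatorname{Range}(D^{n-2}t^{n-2}\mathcal{M})$; since $D^{n-2} = D \circ D^{n-3}$, I would absorb one factor of $D$ into $\mathcal{M}f$.

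Concretely, fix any nonzero $f \in C_c^\infty(\Bb)$ and put $g := t^{-(n-2)} D\bigl(t^{n-2}\mathcal{M}f\bigr)$. Because $\mathcal{M}f(\cdot,t)$ vanishes for $t$ near $0$ and near $2$, so does $D(t^{n-2}\mathcal{M}f)$, and dividing by $t^{n-2}$ — smooth and nonvanishing on $(0,2)$ — keeps us in $C_c^\infty(\Sb^{n-1}\times(0,2))$. Also $g \not\equiv 0$: if it were, $t^{n-2}\mathcal{M}f$ would be $t$-independent, hence (being compactly supported in $t$) identically zero, forcing $\mathcal{M}f \equiv 0$ and so $f \equiv 0$ by injectivity of $\mathcal{M}$, a consequence of~\eqref{inv}. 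By construction $t^{n-2}g = D(t^{n-2}\mathcal{M}f)$, so
\[
D^{n-3}\bigl(t^{n-2}g\bigr) = D^{n-2}\bigl(t^{n-2}\mathcal{M}f\bigr) \in \operatorname{Range}(D^{n-2}t^{n-2}\mathcal{M}) = \operatorname{Ker}(P)
\]
by Theorem~\ref{Kernel-backprojection}, whence $P(D^{n-3}t^{n-2}g) = 0$.

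Finally I would show $g \notin \operatorname{Range}(\mathcal{M})$: if $g = \mathcal{M}h$ with $h \in C_c^\infty(\Bb)$, then~\eqref{inv} applied to $h$ gives $h = c(n)\,\Delta\bigl(P(D^{n-3}t^{n-2}g)\bigr) = 0$ on $\Bb$, so $h \equiv 0$ and $g = \mathcal{M}h = 0$, contradicting $g \not\equiv 0$. Hence $g$ does the job: $P(D^{n-3}t^{n-2}g) = 0 \in I^2[C_c^\infty(\Bb)]$ yet $g \notin \operatorname{Range}(\mathcal{M})$, so the sufficiency part of Conjecture~\ref{C1} fails. I do not expect a genuine obstacle at this stage — all the substantive content sits in Theorem~\ref{Kernel-backprojection} (and, through it, in the range characterization of~\cite{agrawal2023simple}); the only points requiring care are that the constructed $g$ genuinely belongs to $C_c^\infty(\Sb^{n-1}\times(0,2))$ and is not identically zero, both of which reduce to the support properties of $\mathcal{M}f$ near $t=0,2$ and the injectivity of $\mathcal{M}$.
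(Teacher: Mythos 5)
Your proposal is correct and is essentially the paper's own argument: both define $g$ by $t^{n-2}g = D\left(t^{n-2}\mathcal{M}f\right)$ for a nonzero $f$ (the paper writes this explicitly as $g=(n-2)t^{-2}\wt{g}+t^{-1}\wt{g}\,'$ with $\wt{g}=\mathcal{M}f$ radial), deduce $P(D^{n-3}t^{n-2}g)=0$ from the identification of $\operatorname{Ker}(P)$ with $\operatorname{Range}(D^{n-2}t^{n-2}\mathcal{M})$, and exclude $g\in\operatorname{Range}(\mathcal{M})$ via the inversion formula \eqref{inv}. The only cosmetic difference is your non-vanishing argument (compact support of a $t$-constant function) versus the paper's ODE-uniqueness argument; both are valid.
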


Both Theorems \ref{Kernel-backprojection} and \ref{Thm:RubinConj} are based on an equivalent representation of the range characterization results of \cite{agrawal2023simple} in an integral form. This is the content of the next theorem. Interestingly, our proof \textit{directly} shows the equivalence of the distinct formulas stated in \cite{agrawal2023simple} and \cite{finch2006range} (comment below \cite[Theorem 3]{finch2006range}) as range conditions, that is, without invoking the fact that they are range characterizations for SMT.

\begin{theorem}\label{RangeCha-General}
    A function $g\in C_c^{\infty}(\Sb^{n-1}\times (0,2))$ is in $\operatorname{Range}(\mathcal{M})$ if and only if $P(D^{n-2} t^{n-2}g)=0$.
\end{theorem}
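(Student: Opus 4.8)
The plan is to reduce the asserted equivalence to a one‑dimensional statement via a spherical harmonic decomposition, carry out an explicit computation that exploits the polynomial structure that odd $n$ forces on the backprojection kernel, and then match the outcome, degree by degree, with the range characterization of \cite{agrawal2023simple}.

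First I would expand $g(\theta,t)=\sum_{\ell,m}g_{\ell,m}(t)\,Y_{\ell}^{m}(\theta)$ in an orthonormal basis of spherical harmonics on $\Sb^{n-1}$. Both operations in sight respect this splitting: $D^{n-2}t^{n-2}$ acts only in the radial variable $t$, while by the Funk--Hecke theorem $P$ sends $\phi(t)\,Y_{\ell}^{m}(\theta)$ to $(\mathcal{P}_{\ell}\phi)(|x|)\,Y_{\ell}^{m}(x/|x|)$, where, for a nonzero constant $c_{\ell,n}$,
\[
(\mathcal{P}_{\ell}\phi)(r)=c_{\ell,n}\int_{-1}^{1}\phi\!\left(\sqrt{r^{2}-2ru+1}\right)C_{\ell}^{(n-2)/2}(u)\,(1-u^{2})^{(n-3)/2}\,du ,
\]
$C_{\ell}^{(n-2)/2}$ being the Gegenbauer polynomial. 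Hence $P(D^{n-2}t^{n-2}g)=0$ on $\Bb$ is equivalent to $\mathcal{P}_{\ell}\big(D^{n-2}(t^{n-2}g_{\ell,m})\big)\equiv 0$ on $(0,1)$ for every $\ell$ and $m$, and it is enough to analyze one such component.

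The key point is that, $n$ being odd, $(1-u^{2})^{(n-3)/2}$ is a polynomial. Performing the substitution $\tau=\sqrt{r^{2}-2ru+1}$, i.e. $u=(r^{2}+1-\tau^{2})/2r$, turns the kernel of $\mathcal{P}_{\ell}$ into a polynomial in $\tau$ that contains only odd powers, has degree $2(\ell+n-3)+1$, and whose coefficients are explicit functions of $r$. I would then integrate the $D^{n-2}$ by parts onto this polynomial. Since $g$ is compactly supported in the open interval $(0,2)$, so is $D^{n-2}(t^{n-2}g_{\ell,m})$; for $r$ close to $1$ the integration range $[1-r,1+r]$ contains its support, so no boundary terms appear, and a short computation—using that the $d\tau$-adjoint of $D=\tfrac1\tau\partial_\tau$ is $-\partial_\tau\circ\tau^{-1}$, which lowers the degree of a monomial by two and annihilates every power below $2(n-2)$—shows that $D^{n-2}$ kills the bottom part of the kernel and reduces $\mathcal{P}_{\ell}\big(D^{n-2}(t^{n-2}g_{\ell,m})\big)(r)$ to a linear combination $\sum_{k=0}^{\ell-1}a_{k}(r)\,\mathfrak{m}_{k}$, with $\mathfrak{m}_{k}=\int_{0}^{2}g_{\ell,m}(t)\,t^{\,n-1+2k}\,dt$. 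For a general radius $r\in(0,1)$ the same manipulation yields the analogous formula, but with $\mathfrak{m}_{k}$ replaced by truncated integrals over $[1-r,1+r]\cap\operatorname{supp}g_{\ell,m}$, together with boundary contributions at $\tau=1\pm r$ when those endpoints lie in $\operatorname{supp}g_{\ell,m}$.

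It remains to determine when this expression vanishes for all $r\in(0,1)$ and to recognize the resulting list of conditions as the one in \cite{agrawal2023simple} (which, as advertised before the statement, simultaneously establishes directly the equivalence of the \cite{agrawal2023simple} and \cite{finch2006range} forms of the range condition). Sending $r\to 1^{-}$ already forces the moment relations $\mathfrak{m}_{0}=\dots=\mathfrak{m}_{\ell-1}=0$, provided one verifies that $a_{0},\dots,a_{\ell-1}$ are linearly independent near $r=1$; the remaining radii then encode the finer, non‑moment part of the characterization. Running the same computation in reverse—substituting the conditions of \cite{agrawal2023simple} into the displayed formula—collapses $\mathcal{P}_{\ell}\big(D^{n-2}(t^{n-2}g_{\ell,m})\big)$ to zero for every $r$, which is the other implication. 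I expect the real difficulty to be precisely this matching: checking that the polynomial kernels detect exactly the conditions in \cite{agrawal2023simple} and no others—in particular that the truncated integrals and boundary terms that occur for small $r$ neither drop a condition nor impose a spurious one—and organizing the Gegenbauer coefficient identities so that the equivalence is manifestly reversible. As a structural guide for the algebra one may use the adjoint relation $P=\mathcal{M}^{\ast}$ with respect to the weight $t^{n-1}\,dt$, under which $P(D^{n-2}t^{n-2}g)=0$ reads $D^{n-2}t^{n-2}g\perp\operatorname{Range}(\mathcal{M})$, so that the two implications become an orthogonality and a completeness statement about $\operatorname{Range}(\mathcal{M})$ and the range of $\mathcal{M}$ precomposed with the formal adjoint of $D^{n-2}t^{n-2}$.
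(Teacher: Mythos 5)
Your setup coincides with the paper's: expand in spherical harmonics, apply Funk--Hecke, substitute $u=(r^2+1-\tau^2)/2r$ so that for odd $n$ the kernel becomes an odd polynomial in $\tau$, and integrate by parts. Your analysis near $r=1$ (support fully contained, no boundary terms, degree count showing the surviving conditions are the moments $\int_0^2 t^{\,n-1+2j}g_{\ell,m}(t)\,\D t$ for $0\le j\le \ell-1$) is correct and matches the first half of the paper's argument: these moments are, via the analogue of Lemma \ref{FTC}, exactly the statement that $t^{n-2}g_{\ell,m}=D^{\ell}\phi$ for some compactly supported $\phi$, i.e.\ condition \eqref{GRC0} of Theorem \ref{RangeCharacterization-General}. (The linear independence of your $a_0,\dots,a_{\ell-1}$ near $r=1$, which you flag but do not prove, is handled in the paper by showing the coefficient polynomials $q_{j,2m}$ have degree \emph{exactly} $2m-j$, giving a triangular system.)

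The genuine gap is the step you yourself identify as ``the real difficulty'': showing that the vanishing for \emph{all} $r\in(0,1)$ is equivalent to the second range condition \eqref{GRC}. You leave this entirely open, and the route you sketch for it --- ``truncated integrals over $[1-r,1+r]\cap\operatorname{supp}g_{\ell,m}$ together with boundary contributions'' --- does not reflect how the computation actually closes. The resolution is: once the moment conditions give $t^{n-2}g_{\ell,m}=D^{\ell}\phi_{\ell m}$, one substitutes this back and integrates by parts so as to move \emph{all} remaining $D$-derivatives onto $\phi_{\ell m}$; the kernel $B(r,u)=4r^2-(1+r^2-u^2)^2$ satisfies $B(r,1\pm r)=0$ and $D^jB=0$ for $j\ge 3$, so by the special Fa\`a di Bruno formula (Lemma \ref{FdB}) the explicit expression for $D^{k+l}B^{k}$ collapses at the endpoints to a single term, the residual integral is a total derivative, and the whole quantity reduces to the pure boundary expression $[\Lc_{m+k}\phi_{\ell m}](1+r)-[\Lc_{m+k}\phi_{\ell m}](1-r)$ --- with no truncated integrals left over. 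That identity is precisely what makes both implications work (it is Theorem \ref{Thm:EquivRange} with $k$ replaced by $m+k$), and without it neither direction of the equivalence is established. Your adjointness remark $P=\mathcal{M}^{\ast}$ is a reasonable heuristic but does not substitute for this computation.
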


The proofs of these theorems are presented in the next section. We first treat the case of radial functions, and the general case follows as a consequence with suitable modifications. 

Throughout the paper we repeatedly use certain technical results, which are spelled out below for easy reference. The first one is the Fa\`a di Bruno formula, relating the higher order derivatives of the composition of two functions and the derivatives of the individual functions. This is a generalization of the usual chain rule to higher order derivatives (see, for instance, \cite{Krantz-Parks_primer}).

\begin{lemma}[Fa\`a di Bruno formula]
    Let $F$ and $G$ be two smooth functions of a real variable. The derivatives of the composite function $F \circ G$ in terms of the derivatives of $F$ and $G$ are expressed as
    \begin{equation*}
        \frac{\D^p}{\D t^p} F(G(t)) = \sum\limits_{q=1}^p F^{(q)}(G(t)) B_{p,q} (G^{(1)}(t), \dots, G^{(p-q+1)}(t)),
    \end{equation*}
    where $B_{p,q}$ are the Bell polynomials given by
    \[
    B_{p,q}(x_1, \dots, x_{p-q+1}) = \sum \frac{p!}{j_1! \dots j_{p-q+1}!} \lb \frac{x_1}{1!}\rb^{j_1}\cdots  \lb \frac{x_{p-q+1}}{(p-q+1)!}\rb^{j_{p-q+1}},
    \]
with the sum taken over all non-negative sequences, $j_1,\cdots, j_{p-q+1}$ such that the following two conditions are satisfied: 
\begin{align*}
   & j_1+ j_2+\cdots + j_{p-q+1}= q,\\
   &j_1+2j_2+ \cdots + (p-q+1) j_{p-q+1}=p.
\end{align*}
\end{lemma}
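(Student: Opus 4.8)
The plan is to prove the identity by induction on $p$, routing through a cleaner intermediate form indexed by set partitions and then regrouping into the stated Bell-polynomial expression. First I would establish the \emph{set-partition form}
\[
\frac{\D^p}{\D t^p} F(G(t)) = \sum_{\pi} F^{(|\pi|)}(G(t)) \prod_{B \in \pi} G^{(|B|)}(t),
\]
where the sum runs over all set partitions $\pi$ of $\{1, \dots, p\}$, the symbol $|\pi|$ denotes the number of blocks of $\pi$, and $|B|$ the cardinality of a block $B$. The base case $p = 1$ is exactly the chain rule: the only partition of $\{1\}$ is the single block $\{1\}$, producing $F^{(1)}(G(t))\, G^{(1)}(t)$.

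For the inductive step I would differentiate the set-partition form once more and apply the product rule termwise. Differentiating the factor $F^{(|\pi|)}(G(t))$ yields $F^{(|\pi|+1)}(G(t))\, G^{(1)}(t)$, which corresponds to adjoining the new singleton block $\{p+1\}$ to $\pi$; differentiating a factor $G^{(|B|)}(t)$ raises it to $G^{(|B|+1)}(t)$, which corresponds to inserting $p+1$ into the block $B$. The combinatorial heart of the argument is that every set partition of $\{1, \dots, p+1\}$ is obtained from exactly one partition of $\{1, \dots, p\}$ by precisely one of these two moves, the move being determined by whether the block containing $p+1$ is a singleton or not. Hence the differentiated sum is exactly the set-partition form at level $p+1$.

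With the set-partition form established, I would collect the summands by the \emph{type} of the partition, recorded as $(j_1, j_2, \dots)$ with $j_i$ the number of blocks of size $i$. A partition of type $(j_i)$ has $q = \sum_i j_i$ blocks and satisfies $\sum_i i\, j_i = p$, which are precisely the two constraints in the statement, and each such partition contributes $F^{(q)}(G(t)) \prod_i \big(G^{(i)}(t)\big)^{j_i}$. The number of set partitions of $\{1, \dots, p\}$ of a given type is the classical count $\tfrac{p!}{\prod_i (i!)^{j_i}\, j_i!}$, and inserting this multiplicity turns the inner sum over types with fixed $q$ into exactly the Bell polynomial $B_{p,q}$ as defined in the statement. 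Grouping by $q$ then gives the asserted formula.

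The main obstacle is the bookkeeping in the two regroupings rather than any analytic subtlety. In the inductive step one must verify that the two product-rule operations set up a genuine bijection between partitions of $\{1, \dots, p+1\}$ and pairs consisting of a partition of $\{1,\dots,p\}$ together with a choice of move, with no partition counted twice or missed; in the final step one must correctly enumerate labelled set partitions of a fixed type and confirm that $\tfrac{p!}{\prod_i (i!)^{j_i} j_i!}$ coincides with the coefficient $\tfrac{p!}{j_1! \cdots j_{p-q+1}!}(1/1!)^{j_1} \cdots (1/(p-q+1)!)^{j_{p-q+1}}$ appearing in $B_{p,q}$. A purely analytic alternative that avoids set partitions is to induct directly on the Bell-polynomial form, combining the chain-rule identity $\tfrac{\D}{\D t} B_{p,q}(G^{(1)}, \dots) = \sum_i \tfrac{\partial B_{p,q}}{\partial x_i}\, G^{(i+1)}$ with the partial Bell recurrence $B_{p+1,q} = \sum_i \binom{p}{i-1} x_i\, B_{p-i+1,\, q-1}$; this simply relocates the difficulty to verifying the Bell recurrence.
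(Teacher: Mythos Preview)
Your proof is correct; the set-partition form followed by type-collection is one of the standard routes to Fa\`a di Bruno, and your bookkeeping is accurate (the bijection in the inductive step and the type count $p!/\prod_i (i!)^{j_i} j_i!$ are both right). Note, however, that the paper does not supply its own proof of this lemma: it is stated as a classical result with a reference to \cite{Krantz-Parks_primer}, so there is no in-paper argument to compare yours against. Your write-up would serve perfectly well as a self-contained proof should one be desired.
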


Since all main results of this paper use the differential operator $D$, it will be helpful to reformulate the above statement in terms of $D$. 
Multiplying the standard chain rule by $\frac{1}{t}$, the $D$-derivative of the composition of two functions can be re-written as
\[
D (F(G(t))) = F'(G(t)) DG(t),
\]
where $F'$ denotes the usual derivative of $F$. The following lemma is then an easy verification.
\begin{lemma}
    Let $F$ and $G$ be smooth functions of a real variable. Then the $D$-derivatives of the composite function $F \circ G$ satisfy the relation
    \begin{equation*}
        D^p F(G(t)) = \sum\limits_{q=1}^p F^{(q)}(G(t)) B_{p,q} ((DG)(t), \cdots, D^{(p-q+1)}G(t)).
    \end{equation*}
\end{lemma}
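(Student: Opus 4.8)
The plan is to reduce the $D$-version of the formula to the ordinary Fa\`a di Bruno formula (the preceding Lemma) by a change of variable that converts the operator $D=\frac{1}{t}\frac{\D}{\D t}$ into a genuine derivative. The key observation is that if we set $s=t^2/2$, so that $\frac{\D s}{\D t}=t$, then for any smooth function $h$ of $t>0$ the chain rule gives
\[
\frac{\D}{\D s}\,h(t(s)) = h'(t)\,\frac{\D t}{\D s} = \frac{1}{t}\,h'(t) = (Dh)(t).
\]
Thus, on the half-line $t>0$, the operator $D$ is exactly the ordinary derivative $\frac{\D}{\D s}$ expressed in the variable $s$. Note that $t\mapsto s=t^2/2$ is a smooth diffeomorphism of $(0,\infty)$ onto $(0,\infty)$ with smooth inverse $t=\sqrt{2s}$, so this identification is legitimate for all the smooth functions under consideration, which we may restrict to $t>0$ (the only region where $D$ is ever applied in the paper).

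First I would record, by a one-line induction on $k$ using the identity above, that iterating $D$ corresponds to iterating $\frac{\D}{\D s}$: writing $\wt h(s):=h(t(s))$ one has $\frac{\D^k}{\D s^k}\wt h(s) = (D^k h)(t)$ evaluated at $t=t(s)$. In particular, setting $\wt G(s):=G(t(s))$ gives $\wt G^{(k)}(s)=(D^kG)(t)$ and $F(G(t))=F(\wt G(s))$ as a function of $s$, so that $\big(D^pF(G(\cdot))\big)(t) = \frac{\D^p}{\D s^p}F(\wt G(s))$.

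Next I would apply the classical Fa\`a di Bruno formula (the first Lemma) to the composition $F\circ\wt G$ with respect to the variable $s$:
\[
\frac{\D^p}{\D s^p}F(\wt G(s)) = \sum_{q=1}^{p} F^{(q)}(\wt G(s))\, B_{p,q}\big(\wt G^{(1)}(s),\dots,\wt G^{(p-q+1)}(s)\big).
\]
Substituting back $\wt G(s)=G(t)$ and $\wt G^{(k)}(s)=(D^kG)(t)$ then yields precisely the claimed identity, namely $D^p F(G(t)) = \sum_{q=1}^p F^{(q)}(G(t)) B_{p,q}\big((DG)(t),\dots,(D^{p-q+1}G)(t)\big)$.

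I do not anticipate a serious obstacle; the content is entirely in spotting the substitution $s=t^2/2$. The only point requiring a little care is the restriction to $t>0$, which is needed for $s=t^2/2$ to be a smooth change of variable (the map degenerates at $t=0$); since $D$ is only ever applied for $t>0$ here, this causes no loss. As an alternative that avoids the change of variable, one could argue directly by induction on $p$: the operator $D$ is a derivation, $D(uv)=(Du)v+u(Dv)$, and satisfies the first-order chain rule $D(F(G))=F'(G)\,DG$ stated just above the lemma; applying $D$ to the order-$p$ formula and combining these two properties with the standard Bell-polynomial recurrence reproduces the order-$(p+1)$ formula. I would prefer the change-of-variable route, since it lets us quote the ordinary formula verbatim rather than re-deriving the Bell-polynomial bookkeeping, and it makes transparent \emph{why} the $D$-formula is formally identical to the classical one.
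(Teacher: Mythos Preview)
Your proposal is correct. The paper does not actually prove this lemma; it merely records the first-order identity $D(F(G(t)))=F'(G(t))\,DG(t)$ and then says the higher-order version ``is then an easy verification,'' implicitly pointing toward the inductive argument you sketch as your alternative. Your primary route---the substitution $s=t^2/2$, which turns $D$ into $\frac{\D}{\D s}$ and lets you quote the classical Fa\`a di Bruno formula verbatim---is a genuinely different and cleaner argument: it explains at a glance why the $D$-version has \emph{identical} combinatorics to the ordinary one, whereas the inductive route requires reproducing the Bell-polynomial recursion by hand. Your caveat about restricting to $t>0$ is appropriate and sufficient for the paper's purposes.
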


This paper requires only the following special case of the above formula proved in \cite{agrawal2023simple}.

\begin{lemma}[Fa\`a di Bruno  formula - special case]\label{FdB}
    Let $F$ and $G$ be smooth functions of a real variable such that $D^jG = 0$ for $j \geq 3$. Then the following identity holds
    \begin{align*}
        D^p F(G(t)) &= \sum\limits_{q \geq p/2}^p  \frac{p!}{(2q-p)! (p-q)! 2^{p-q}} F^{(q)}(G(t)) \lb DG(t)\rb^{2q-p} \lb D^2G(t) \rb^{p-q}.
    \end{align*}
\end{lemma}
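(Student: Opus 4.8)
The plan is to derive this identity directly from the general $D$-version of the Fa\`a di Bruno formula stated in the preceding lemma, by exploiting the hypothesis that all $D$-derivatives of $G$ of order at least three vanish. Writing $x_i = D^i G(t)$, the general formula reads
\[
D^p F(G(t)) = \sum_{q=1}^p F^{(q)}(G(t))\, B_{p,q}(x_1, \dots, x_{p-q+1}),
\]
so the entire content of the claim is that, under the stated hypothesis, each Bell polynomial $B_{p,q}$ collapses to a single monomial in $x_1$ and $x_2$.

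First I would substitute $x_i = 0$ for every $i \geq 3$ into the defining sum for $B_{p,q}$. A summand indexed by $(j_1, \dots, j_{p-q+1})$ survives only if $j_i = 0$ whenever $i \geq 3$, so the only potentially nonzero contributions come from sequences supported on $j_1, j_2$. For such a sequence the two defining constraints of the Bell polynomial reduce to the linear system $j_1 + j_2 = q$ and $j_1 + 2 j_2 = p$. Subtracting the first from the second gives $j_2 = p - q$, and back-substitution gives $j_1 = 2q - p$. Thus at most one admissible sequence survives for each $q$, and $B_{p,q}$ is either zero or a single monomial.

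Next I would determine for which $q$ this surviving sequence is actually admissible, i.e.\ has non-negative entries. The requirement $j_2 = p - q \geq 0$ gives $q \leq p$, while $j_1 = 2q - p \geq 0$ gives $q \geq p/2$; hence the sum over $q$ runs precisely over $p/2 \leq q \leq p$, matching the stated summation range. For each such $q$, evaluating the single surviving monomial with $j_1 = 2q-p$ and $j_2 = p-q$ gives
\[
B_{p,q}(x_1, x_2, 0, \dots) = \frac{p!}{(2q-p)!\,(p-q)!} \lb \frac{x_1}{1!} \rb^{2q-p} \lb \frac{x_2}{2!} \rb^{p-q} = \frac{p!}{(2q-p)!\,(p-q)!\,2^{p-q}}\, x_1^{2q-p}\, x_2^{p-q}.
\]
Substituting $x_1 = DG(t)$ and $x_2 = D^2 G(t)$ back into the general formula then yields the claimed identity. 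It is worth checking the two extreme values as a sanity test: at $q = p$ one gets $j_2 = 0$, so only $x_1^p$ appears, and at $q = p/2$ (for $p$ even) one gets $j_1 = 0$, so only $(x_2/2)^{p/2}$ appears, both consistent with the general formula.

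The proof has no genuine analytic difficulty; it is entirely combinatorial bookkeeping. The one point that requires care is confirming that the two linear constraints admit a \emph{unique} non-negative solution $(j_1, j_2)$ for each $q$ in the asserted range, and that the endpoints $q = p/2$ and $q = p$ are correctly included; this is what simultaneously pins down the summation limits and the explicit factorial coefficient. I expect this index-and-constant verification to be the only place where an error could creep in.
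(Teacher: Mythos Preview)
Your proof is correct. The paper itself does not give an in-text proof of this lemma; it merely records the statement and cites \cite{agrawal2023simple} for the argument, so there is no proof in the paper to compare against. Your derivation is exactly the natural one suggested by the setup: specialize the $D$-version of the Fa\`a di Bruno formula (the lemma immediately preceding this one in the paper) by setting $x_i = D^i G(t) = 0$ for $i\ge 3$, observe that the Bell-polynomial constraints $j_1+j_2=q$, $j_1+2j_2=p$ force the unique solution $j_1=2q-p$, $j_2=p-q$, and read off both the summation range $p/2\le q\le p$ and the coefficient $p!/\bigl((2q-p)!(p-q)!\,2^{p-q}\bigr)$. There is no gap.
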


Two other results employed in the article include the integration by parts formula involving the operator $D$ (proved by direct verification) and the Funk-Hecke theorem (e.g. see \cite{Natterer_book,Rubin-book}).

\begin{lemma}\label{lemma-IBP}
    For smooth functions $F$ and $G$, the following identity holds:
    \begin{align}
        \int\limits_{a}^b \PD_t D^k F \cdot G \, \D t &= \left [ \sum\limits_{l=0}^{k-1} (-1)^l D^{k-l} F \cdot D^{l} G \right ]_{t=a}^{b} + (-1)^k \int\limits_a^b \PD_t F \cdot D^{k} G \, \D t, 
    \end{align}
    where the sum is interpreted as empty for $k=0$.
\end{lemma}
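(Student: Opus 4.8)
The plan is to prove the identity by induction on $k$, reducing the $k$-th case to the $(k-1)$-th by a single ordinary integration by parts. The base case $k=0$ is immediate: the boundary sum is empty and the right-hand side collapses to $\int_a^b \PD_t F \cdot G \, \D t$, which is the left-hand side.

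The heart of the argument is a one-step reduction formula. Writing $H = D^{k-1}F$ so that $D^k F = DH$, ordinary integration by parts gives $\int_a^b \PD_t(DH)\cdot G \, \D t = [\,DH \cdot G\,]_a^b - \int_a^b DH \cdot \PD_t G \, \D t$. The key observation is that $D = \tfrac1t \PD_t$ is, up to the weight $\tfrac1t$, self-adjoint in the pointwise sense that $DH \cdot \PD_t G = \tfrac1t \PD_t H \, \PD_t G = \PD_t H \cdot DG$; hence the last integral equals $\int_a^b \PD_t H \cdot DG \, \D t$. Substituting $H = D^{k-1}F$ yields the reduction
\[
\int_a^b \PD_t D^k F \cdot G \, \D t = [\,D^k F \cdot G\,]_a^b - \int_a^b \PD_t D^{k-1}F \cdot DG \, \D t,
\]
which expresses the $k$-th case (with datum $G$) in terms of the $(k-1)$-th case (with datum $DG$).

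With this in hand the induction is routine. I would apply the inductive hypothesis to the right-hand integral, with $G$ replaced by $DG$ and using $D^l(DG) = D^{l+1}G$; this produces the boundary sum $\big[\sum_{l=0}^{k-2}(-1)^l D^{k-1-l}F \cdot D^{l+1}G\big]_a^b$ together with the integral $(-1)^{k-1}\int_a^b \PD_t F \cdot D^k G \, \D t$. Reindexing the boundary sum by $m=l+1$ turns it into $\big[\sum_{m=1}^{k-1}(-1)^{m-1}D^{k-m}F \cdot D^{m}G\big]_a^b$; feeding this back through the reduction formula flips its overall sign, and the leading term $[\,D^kF\cdot G\,]_a^b$ supplies precisely the missing $m=0$ summand. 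The boundary contributions then assemble into $\big[\sum_{m=0}^{k-1}(-1)^m D^{k-m}F \cdot D^m G\big]_a^b$, while the remaining integral becomes $(-1)^k\int_a^b \PD_t F \cdot D^k G \, \D t$, which is exactly the asserted identity.

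I do not expect a genuine obstacle here; the only delicate point is the sign and index bookkeeping when the inductive boundary sum is shifted by one and reinserted, together with correctly tracking the extra minus sign introduced by the reduction step. The analytic content is entirely contained in the elementary weight identity $DH \cdot \PD_t G = \PD_t H \cdot DG$, which holds pointwise for smooth $F,G$ and requires no integrability or support hypotheses beyond smoothness on $[a,b]$.
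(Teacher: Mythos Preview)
Your argument is correct: the one-step reduction $\int_a^b \PD_t D^k F \cdot G \, \D t = [D^k F \cdot G]_a^b - \int_a^b \PD_t D^{k-1}F \cdot DG \, \D t$, based on the pointwise identity $DH\cdot \PD_t G = \PD_t H\cdot DG$, together with the induction and reindexing you describe, yields exactly the stated formula. The paper does not spell out a proof beyond the remark that the lemma is ``proved by direct verification,'' and your induction is precisely the natural way to carry out that verification, so the approaches coincide.
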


\begin{theorem}
[Funk-Hecke]
If $\int\limits_{-1}^{1} |F(t)|(1-t^{2})^{\frac{n-3}{2}} \D t <\infty$, then for each $\eta\in \Sb^{n-1}$, 
\[
\int\limits_{\Sb^{n-1}}F\lb \langle \sigma,\eta\rangle \rb Y_{ml}(\sigma) \D S(\sigma) = \frac{\left|\Sb^{n-2}\right|}{C_{m}^{\frac{n-2}{2}}(1)}\lb\int\limits_{-1}^{1} F(t) C_{m}^{\frac{n-2}{2}}(t) (1-t^{2})^{\frac{n-3}{2}} \D t\rb Y_{ml}(\eta),
\]
where $\lvert \Sb^{n-2}\rvert$ denotes the surface measure of the unit sphere in $\Rb^{n-1}$, $C_{m}^{\frac{n-2}{2}}(t)$ are the Gegenbauer polynomials, and $Y_{ml}$ for $ 0\leq m<\infty$, 
$1\leq l\leq d_{m}=\frac{(2m+n-2)(n+m-3)!}{m!(n-2)!},d_0 =1$, are the spherical harmonics. 
\end{theorem}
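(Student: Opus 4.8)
The plan is to exploit the $O(n)$-symmetry of the kernel $F(\langle\sigma,\eta\rangle)$ rather than to evaluate the integral directly. Introduce the operator $(T_F h)(\eta)=\int_{\Sb^{n-1}}F(\langle\sigma,\eta\rangle)\,h(\sigma)\,\D S(\sigma)$, so that the assertion is precisely that $T_F Y_{ml}=\lambda_m Y_{ml}$ with $\lambda_m$ equal to the stated constant. First I would record the slicing formula
\[
\int_{\Sb^{n-1}}G(\langle\sigma,\eta\rangle)\,\D S(\sigma)=\lvert\Sb^{n-2}\rvert\int_{-1}^{1}G(t)\,(1-t^{2})^{\frac{n-3}{2}}\,\D t,
\]
obtained by slicing the sphere orthogonally to $\eta$ (the weight $(1-t^2)^{\frac{n-3}{2}}$ being exactly the Jacobian of this slicing). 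Applied to $\lvert F\rvert$, this shows, precisely under the integrability hypothesis, that $T_F$ is well defined on bounded functions, in particular on every spherical harmonic.

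Next I would verify that $T_F$ is an intertwiner: for $R\in O(n)$ and $(R\cdot h)(\sigma)=h(R^{-1}\sigma)$, the substitution $\sigma=R\tau$ together with $\langle R\tau,\eta\rangle=\langle\tau,R^{-1}\eta\rangle$ and the rotation invariance of $\D S$ gives $T_F(R\cdot h)=R\cdot(T_F h)$ in one line. Invoking the decomposition $L^{2}(\Sb^{n-1})=\bigoplus_{m\geq0}\mathcal{H}_m$ into spaces of spherical harmonics, where each $\mathcal{H}_m$ is an irreducible $O(n)$-module and the distinct $\mathcal{H}_m$ are pairwise inequivalent, any intertwiner must preserve each $\mathcal{H}_m$ and, by Schur's lemma, act on it as a scalar $\lambda_m=\lambda_m(F)$. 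This already yields $T_F Y_{ml}=\lambda_m Y_{ml}$ for all $l$, which is the proportionality to $Y_{ml}(\eta)$ claimed; it remains only to identify $\lambda_m$.

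To compute $\lambda_m$ I would feed $T_F$ the zonal harmonic. The classical fact that the Gegenbauer polynomial realizes the zonal element of $\mathcal{H}_m$ means the function $\phi(\sigma)=C_m^{\frac{n-2}{2}}(\langle\sigma,e_n\rangle)$ lies in $\mathcal{H}_m$, hence $T_F\phi=\lambda_m\phi$. Evaluating this identity at the pole $\eta=e_n$, using $\phi(e_n)=C_m^{\frac{n-2}{2}}(1)$ on the right and the slicing formula on the left, gives
\[
\lambda_m\,C_m^{\frac{n-2}{2}}(1)=\lvert\Sb^{n-2}\rvert\int_{-1}^{1}F(t)\,C_m^{\frac{n-2}{2}}(t)\,(1-t^{2})^{\frac{n-3}{2}}\,\D t,
\]
which is exactly the constant appearing in the statement. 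Solving for $\lambda_m$ and substituting back completes the proof.

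The principal obstacle is the harmonic-analytic input rather than any computation: one must establish that the zonal Gegenbauer function $\sigma\mapsto C_m^{\frac{n-2}{2}}(\langle\sigma,e_n\rangle)$ is a genuine degree-$m$ spherical harmonic, and that $L^{2}(\Sb^{n-1})$ splits into pairwise-inequivalent irreducible $O(n)$-modules so that Schur's lemma applies. These classical facts carry the entire structural weight of the argument, after which the equivariance check and the pole evaluation are routine. A self-contained alternative that sidesteps representation theory would expand $F$ in the complete orthogonal Gegenbauer system of $L^{2}([-1,1],(1-t^{2})^{\frac{n-3}{2}}\,\D t)$ and apply the addition theorem $\sum_{l}Y_{ml}(\sigma)\overline{Y_{ml}(\eta)}=\tfrac{d_m}{\lvert\Sb^{n-1}\rvert}\,C_m^{\frac{n-2}{2}}(\langle\sigma,\eta\rangle)/C_m^{\frac{n-2}{2}}(1)$ followed by orthonormality of the $Y_{ml}$; this trades Schur's lemma for a termwise integration and a reconciliation of the normalizing constants $d_m$, $\lvert\Sb^{n-1}\rvert$, and the Gegenbauer $L^2$-norm.
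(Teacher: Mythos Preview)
The paper does not prove the Funk--Hecke theorem; it is quoted as a classical result with a citation to standard references (Natterer's book and Rubin's book), so there is no proof in the paper to compare against. Your sketch via the $O(n)$-equivariance of the integral operator $T_F$, Schur's lemma on the irreducible decomposition $L^2(\Sb^{n-1})=\bigoplus_m\mathcal{H}_m$, and evaluation on the zonal harmonic $C_m^{\frac{n-2}{2}}(\langle\sigma,e_n\rangle)$ is one of the standard correct arguments for this theorem, and your identification of the places where the real content lies (irreducibility of the $\mathcal{H}_m$ and the fact that the zonal Gegenbauer function is a degree-$m$ spherical harmonic) is accurate.
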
 
Finally, we recall the range characterization for SMT in odd dimensions proved in \cite{agrawal2023simple}. It is used in the proof of Theorem \ref{RangeCha-General} presented in the next section.  
\begin{theorem}[\cite{agrawal2023simple}]\label{RangeCharacterization-General}
    Let $\Bb$ denote the unit ball in $\mathbb{R}^n$ for an odd $n \geq 3$, and $k: = (n-3)/2$. A function $g \in C_c^\infty (\Sb^{n-1}\times (0,2))$ is representable as $g = \mathcal{M} f$ for $f\in C_c^\infty(\Bb)$ if and only if  for each $(m,l), m\geq 0, 0\leq l\leq d_m$, $h_{ml}(t)=t^{n-2}g_{ml}(t)$ satisfies the following two conditions:
    \begin{itemize}
        \item there is a function $\phi_{ml}\in C_c^{\infty}((0,2))$ such that 
        \Beq\label{GRC0}
            h_{ml}(t)= D^{m} \phi_{ml}(t),
        \Eeq
        \item the function $\phi_{ml}(t)$ satisfies 
        \begin{equation}\label{GRC}
           [\Lc_{m+k}\phi_{ml}](1-t)= [\Lc_{m+k}\phi_{ml}](1+t),
        \end{equation}
        with
        \[
         \Lc_{m+k} = \sum\limits_{p=0}^{m+k} \frac{ (m+k+p)! }{(m+k-p)! p! 2^p} (1-\cdot)^{m+k-p} D^{m+k-p} \mbox{ and }  D = \frac{1}{t} \frac{\D}{\D t}.
        \]
    \end{itemize}
\end{theorem}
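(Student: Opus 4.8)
The plan is to reduce the statement, via spherical harmonics and the Funk--Hecke theorem, to an explicit one-dimensional integral transform relating the radial profiles of $f$ and $g=\mathcal{M}f$, and then to read off conditions \eqref{GRC0}--\eqref{GRC} from the algebraic structure of that transform. Since $\mathcal{M}$ is $O(n)$-equivariant ($\Bb$ and $\Sn$ being rotation invariant), it preserves each isotypic component, so writing $f(r\omega)=\sum_{m,l}f_{ml}(r)Y_{ml}(\omega)$ and $g(p,t)=\sum_{m,l}g_{ml}(t)Y_{ml}(p)$ reduces everything to a fixed degree $m$. Taking $f=f_{ml}(|x|)C_m^{\frac{n-2}{2}}(\langle x/|x|,e\rangle)$ and evaluating $\mathcal{M}f$ at the center $p=e$ (so the zonal pole coincides with the evaluation point), the integrand depends on $\theta$ only through $s=\langle\theta,e\rangle$; integrating out the azimuthal angle by Funk--Hecke and substituting $r=|e+t\theta|=\sqrt{1+t^2+2ts}$ yields
\begin{equation*}
h_{ml}(t)=t^{n-2}g_{ml}(t)=c(n,m)\int\limits_{|1-t|}^{1+t} f_{ml}(r)\,r^{n-2}\,C_{m}^{\frac{n-2}{2}}(u)\,(1-u^2)^{k}\,\D r,\qquad u=\frac{1+r^2-t^2}{2r},
\end{equation*}
where I use the identity $\bigl(r^2-(1-t)^2\bigr)\bigl((1+t)^2-r^2\bigr)=4r^2(1-u^2)$ to put the Funk--Hecke weight $(1-s^2)^{\frac{n-3}{2}}$ into this form.

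The crux is the observation that $Du=-1/r$ and hence $D^2u=0$, where $D=\frac{1}{t}\frac{\D}{\D t}$ and $r$ is the inert integration variable. Combining the Rodrigues formula $C_m^{\frac{n-2}{2}}(u)(1-u^2)^{k}=c_m\,\frac{\D^m}{\D u^m}(1-u^2)^{m+k}$ with the clean instance $D^m\Phi(u)=(-1/r)^m\Phi^{(m)}(u)$ of Lemma \ref{FdB}, the kernel becomes $(-r)^m c_m\,D^m\bigl[(1-u^2)^{m+k}\bigr]$. Since $(1-u^2)^{m+k}$ vanishes to order $m+k\ge m$ at the endpoints $r=|1-t|$ and $r=1+t$ (where $u=\pm1$), the boundary contributions from differentiating the $t$-dependent limits vanish, and $D^m$ may be pulled outside the integral. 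This gives exactly condition \eqref{GRC0}, $h_{ml}=D^m\phi_{ml}$, with
\begin{equation*}
\phi_{ml}(t)=\tilde c\int\limits_{|1-t|}^{1+t} f_{ml}(r)\,r^{m+n-2}\,(1-u^2)^{m+k}\,\D r =\tilde c\,'\int\limits_{|1-t|}^{1+t} f_{ml}(r)\,r^{1-m}\bigl[(r^2-(1-t)^2)((1+t)^2-r^2)\bigr]^{m+k}\,\D r,
\end{equation*}
which is smooth (the factor $r^{1-m}$ being harmless since $f_{ml}$ vanishes to order $m$ at the origin) and compactly supported in $(0,2)$.

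To extract the reflection condition \eqref{GRC}, I would analyze $\phi_{ml}$ through the operator $\Lc_{m+k}$, whose index matches precisely the Jacobi exponent $m+k$ of the weight above --- a strong indication that $\Lc_{m+k}$ is tailored to ``decode'' this weight. The plan is to apply $D$ repeatedly and integrate by parts via Lemma \ref{lemma-IBP}, transferring derivatives onto the factor $\bigl[(r^2-(1-t)^2)((1+t)^2-r^2)\bigr]^{m+k}$; the finite binomial expansion of this weight about each endpoint produces exactly the coefficients $\frac{(m+k+p)!}{(m+k-p)!\,p!\,2^p}$ together with the factors $(1-\cdot)^{m+k-p}D^{m+k-p}$ defining $\Lc_{m+k}$. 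The two endpoints $r=1-t$ and $r=1+t$ are reflections of one another about $r=1$, the boundary of the support of $f_{ml}$; matching the two boundary contributions and using $\operatorname{supp}f_{ml}\subset[0,1]$ yields $[\Lc_{m+k}\phi_{ml}](1-t)=[\Lc_{m+k}\phi_{ml}](1+t)$.

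For the converse I would reverse these steps: given $h_{ml}=D^m\phi_{ml}$ with $\phi_{ml}$ satisfying the evenness \eqref{GRC}, the map $\phi_{ml}\mapsto f_{ml}$ is governed by an Abel/Jacobi-type Volterra transform with triangular structure, whose inversion is explicit; conditions \eqref{GRC0}--\eqref{GRC} are exactly what guarantee that the reconstructed $f_{ml}$ is smooth and supported in $[0,1]$. Assembling $f=\sum_{m,l}f_{ml}Y_{ml}$ and controlling convergence of the series (through decay of the $h_{ml}$ in $(m,l)$) then produces $f\in C_c^\infty(\Bb)$ with $\mathcal{M}f=g$. I expect the main obstacle to lie in the sufficiency direction, and specifically in the step of the third paragraph: verifying that the endpoint expansion of the Jacobi weight interacts with the $D$-integration by parts to produce precisely $\Lc_{m+k}$, and that the resulting reflection symmetry is not merely necessary but exactly characterizes the constraint $\operatorname{supp}f_{ml}\subset[0,1]$, is the delicate combinatorial heart of the proof.
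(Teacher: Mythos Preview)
The paper does not prove this theorem. It is quoted from \cite{agrawal2023simple} as a prerequisite (``Finally, we recall the range characterization for SMT in odd dimensions proved in \cite{agrawal2023simple}'') and is then used as a black box in the proofs of Theorems~\ref{RangeCha-General} and~\ref{Kernel-backprojection}. So there is no proof in the present paper to compare your proposal against.

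That said, your outline of the necessity direction is sound and, in spirit, close to the techniques the paper does deploy for its own results: the Funk--Hecke reduction, the Rodrigues representation of $C_m^{\frac{n-2}{2}}$, the observation that $Du=-1/r$ forces $D^2u=0$ so that Lemma~\ref{FdB} collapses to a single term, and the vanishing of boundary contributions due to the factor $(1-u^2)^{m+k}$ are exactly the ingredients the paper uses in its proof of Theorem~\ref{RangeCha-General}. Your plan to read off $\Lc_{m+k}$ from the endpoint expansion via Lemma~\ref{lemma-IBP} is also consistent with the computation leading to \eqref{RC-simplified}.

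Where your proposal is genuinely thin is the sufficiency direction. You describe it as inverting an ``Abel/Jacobi-type Volterra transform'' and assert that conditions \eqref{GRC0}--\eqref{GRC} are ``exactly what guarantee'' smoothness and support in $[0,1]$, but you give no mechanism. This is the nontrivial half: one must show that the reflection identity \eqref{GRC} forces the reconstructed $f_{ml}$ to vanish for $r\ge 1$, that the resulting $f_{ml}$ has the correct parity and vanishing order at $r=0$ to make $\sum f_{ml}Y_{ml}$ smooth across the origin, and that the series converges in $C_c^\infty(\Bb)$. None of these steps is addressed, and since the present paper does not supply them either, you would need to consult \cite{agrawal2023simple} directly to complete the argument.
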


\section{Proofs} 
We first prove the radial version of Theorem \ref{RangeCha-General} and then move on to the 
general case. Henceforth, the letter $C$ is used to denote dimensional constants, whose value may change from line to line in a given computation. The exact value of the constants can be computed, but does not affect the analysis.
\begin{theorem}\label{Thm:EquivRange}
    Let $g\in C_c^{\infty}((0,2))$. Then $g\in \operatorname{Range}(\mathcal{M})$ if and only if $P(D^{n-2} t^{n-2}g)=0$.
\end{theorem}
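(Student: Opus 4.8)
\textit{Proof proposal.} Write $h(t)=t^{n-2}g(t)$ and $k=(n-3)/2$, so that $n-2=2k+1$. Since $g$ (hence $h$) is radial, $P(D^{n-2}h)$ is radial, and the plan is to compute its profile $r\mapsto P(D^{n-2}h)(re)$ on $0<r<1$ in closed form and then read off the claim from the $m=0$ case of Theorem~\ref{RangeCharacterization-General}. First I would apply the Funk--Hecke theorem with $m=0$ (so $C_{0}^{(n-2)/2}\equiv 1$): since $|re-\theta|=\sqrt{1+r^{2}-2r\langle e,\theta\rangle}$, this yields
\[
P(D^{n-2}h)(re)=\frac{|\Sb^{n-2}|}{\omega_{n}}\int_{-1}^{1}(D^{2k+1}h)\bigl(\sqrt{1+r^{2}-2rs}\bigr)\,(1-s^{2})^{k}\,\D s .
\]
Substituting $t=\sqrt{1+r^{2}-2rs}$ turns this into $C\int_{1-r}^{1+r}(D^{2k+1}h)(t)\,w_{r}(t)\,\D t$, with the weight $w_{r}(t)=\tfrac{t}{r}\bigl(\tfrac{(t^{2}-(1-r)^{2})((1+r)^{2}-t^{2})}{4r^{2}}\bigr)^{k}$. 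Two features of $w_{r}$ drive everything: (i) $w_{r}(t)/t$ is a polynomial of degree $2k$ in $u=t^{2}$, so since $D=\tfrac1t\tfrac{\D}{\D t}$ acts on functions of $u$ simply as $2\,\D/\D u$, every $D$-derivative of $w_{r}/t$ occurring below is an ordinary polynomial derivative; and (ii) $w_{r}/t$ vanishes to order exactly $k$ at each endpoint $t=1\mp r$, hence $D^{l}(w_{r}/t)$ vanishes there whenever $l\le k-1$.

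Next I would integrate by parts once. Writing $D^{2k+1}h=\tfrac1t\PD_{t}(D^{2k}h)$ and applying Lemma~\ref{lemma-IBP} with its ``$k$'' equal to $2k$ and with $F=h$, $G=w_{r}/t$, produces boundary terms at $t=1\mp r$ together with $\int_{1-r}^{1+r}\PD_{t}h\cdot D^{2k}(w_{r}/t)\,\D t$; but $D^{2k}(w_{r}/t)$ is a constant by feature (i), so this last integral equals that constant times $[h]_{1-r}^{1+r}$, i.e.\ yet another boundary term. Discarding the boundary terms that vanish by feature (ii) and reindexing, one is left with the clean identity
\[
\int_{1-r}^{1+r}(D^{2k+1}h)(t)\,w_{r}(t)\,\D t=\Bigl[\textstyle\sum_{j=0}^{k}(-1)^{j}\,(D^{j}h)(t)\,D^{2k-j}\!\bigl(\tfrac{w_{r}}{t}\bigr)(t)\Bigr]_{t=1-r}^{t=1+r}.
\]

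It then remains to evaluate $D^{2k-j}(w_{r}/t)$ at the two endpoints. By feature (i) this equals $\tfrac{2^{2k-j}}{r}$ times the $(2k-j)$-th $u$-derivative, at $u=(1-r)^{2}$ and $u=(1+r)^{2}$ respectively, of $\tfrac{(-1)^{k}}{(4r^{2})^{k}}(u-(1-r)^{2})^{k}(u-(1+r)^{2})^{k}$, and the Leibniz rule collapses it to a single surviving term. A direct computation gives $D^{2k-j}(w_{r}/t)\big|_{t=1-r}=(-1)^{k+j}\tfrac{(2k-j)!\,k!}{(k-j)!\,j!}\,2^{j}\,r^{\,j-2k-1}$ and $D^{2k-j}(w_{r}/t)\big|_{t=1+r}=(-1)^{k}\tfrac{(2k-j)!\,k!}{(k-j)!\,j!}\,2^{j}\,r^{\,j-2k-1}$, differing only by the factor $(-1)^{j}$. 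Since $\tfrac{(2k-j)!}{j!(k-j)!}\,2^{j}=2^{k}\cdot\tfrac{(2k-j)!}{j!(k-j)!\,2^{k-j}}$, and (putting $p=k-j$, $m=0$) $\tfrac{(2k-j)!}{j!(k-j)!\,2^{k-j}}$ is exactly the coefficient of $(1-\cdot)^{j}D^{j}$ in $\mathcal{L}_{k}$, substituting into the boundary identity collapses it, and one obtains
\[
P(D^{n-2}t^{n-2}g)(re)=C'\,r^{-(n-2)}\bigl([\mathcal{L}_{k}h](1-r)-[\mathcal{L}_{k}h](1+r)\bigr),\qquad 0<r<1,
\]
with $C'\neq 0$ a dimensional constant. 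The theorem is then immediate: the left side vanishes on all of $\Bb$ iff $[\mathcal{L}_{k}h](1-t)=[\mathcal{L}_{k}h](1+t)$ on $(0,2)$, which by Theorem~\ref{RangeCharacterization-General} at $m=0$ (where the representation condition $h=D^{0}\phi$ holds trivially with $\phi=h$) is precisely the condition for $g\in\operatorname{Range}(\mathcal{M})$.

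The step I expect to be the real obstacle is matching the constants: there is no a priori reason why the weight $w_{r}$ coming out of the Funk--Hecke theorem should ``see'' the operator $\mathcal{L}_{k}$ from the range characterization of \cite{agrawal2023simple}, so the bookkeeping of the powers of $2$, of $r$, and of the factorials in the Leibniz evaluation of $D^{2k-j}(w_{r}/t)$ at $t=1\mp r$ must be carried through carefully and shown to reproduce the coefficients $\tfrac{(k+p)!}{(k-p)!\,p!\,2^{p}}$ exactly --- together with the verification that all the intermediate boundary terms indeed cancel. Everything preceding this (Funk--Hecke, the change of variables, and the single integration by parts) is routine, and everything after it is purely formal.
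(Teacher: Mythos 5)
Your proposal is correct and follows essentially the same route as the paper's proof: Funk--Hecke in the radial ($m=0$) case, the substitution reducing the integral to one against $\bigl(4r^2-(1+r^2-u^2)^2\bigr)^k$ over $(1-r,1+r)$, integration by parts via Lemma \ref{lemma-IBP}, evaluation of the surviving boundary terms to produce $C r^{-(2k+1)}\bigl([\Lc_k h](1+r)-[\Lc_k h](1-r)\bigr)$, and then Theorem \ref{RangeCharacterization-General} at $m=0$. The only difference is organizational: the paper first integrates by parts $k$ times without boundary terms and computes $D^{k+l}B^k$ via the special Fa\`a di Bruno formula (Lemma \ref{FdB}), whereas you perform the transfer in one pass and evaluate the endpoint derivatives by Leibniz applied to the factored polynomial $(u-(1-r)^2)^k(u-(1+r)^2)^k$ in $u=t^2$; both yield the same coefficients of $\Lc_k$.
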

\bpr 
Define $h(t)=t^{n-2}g(t)$ and let $k=\frac{n-3}{2}$. For every $x\in\mathbb{B}$, one can compute
\Beq
\begin{aligned}
P(D^{n-2} h)(x)&=\int\limits_{\Sb^{n-1}}[D^{n-2}h](|x-\theta|) \D S(\theta)\\
&=\int\limits_{\Sb^{n-1}} [D^{n-2}h]\lb \sqrt{|x|^2+1-2x\cdot \theta}\rb \D S(\theta)\\
&=C\int\limits_{-1}^{1} [D^{n-2}h]\lb \sqrt{|x|^2+1-2|x|t}\rb \lb 1-t^{2}\rb^{k} \D t,
\end{aligned} 
\Eeq
using the Funk-Hecke formula in the last step.
Making the substitution $u=\sqrt{|x|^2+1-2|x|t}$ yields 
\[
P(D^{n-2}h)(x)=\frac{C}{|x|^{n-2}}\int\limits_{1-|x|}^{1+|x|}u[D^{2k+1}h](u)\lb 4|x|^2-(1+|x|^2-u^2)^2\rb^{k} \D u.
\]

Let us denote 
\[
A(x,u)=1+|x|^2-u^2,
\]
and 
\[
B(x,u)=4|x|^2-A^2(x,u).
\]
Observe that
\begin{align*}
    A(x, 1\pm |x|) &= \mp 2|x|, \\
    B(x, 1 \pm |x|) &= 0.
\end{align*}
Due to the above equality, one can apply integration by parts $k$ times without picking up any boundary terms and obtain  
\Beq\label{Eq:RangeCh-Radial}
P(D^{n-2}h)(x)=\frac{C}{|x|^{n-2}}\int\limits_{1-|x|}^{1+|x|}u[D^{k+1}h](u) D^{k}B^{k} \D u.
\Eeq
We will transfer another $k$ derivatives using integration by parts, but this time picking up boundary terms. Before performing that computation, let us find an expression for 
\[
D^{k+l} B^k \quad \mbox{for } \quad 0 \leq l \leq k.
\]
Observe that 
\begin{align*}
    D B &= 4 A, \\
     D^2 B &= -8, \\
    \mbox{and} \quad D^j B &= 0 \quad \mbox{for } j \geq 3.
\end{align*}
Invoking the special case of Fa\`a di Bruno formula (see Lemma \ref{FdB}), one gets 
\begin{align*}
    D^{k+l} B^k &= \sum\limits_{i \geq \frac{k+l}{2}}^k (-1)^{k+l-i} \frac{k!(k+l)! 2^{2i}}{(k-i)! (2i-k-l)! (k+l-i)!} A^{2i-k-l} B^{k-i}.
\end{align*}
Substituting this above leads to 
\begin{align*}
    P(D^{n-2}h)(x) &= \frac{C}{|x|^{2k+1}} \left [ \sum\limits_{l=0}^{k-1} (-1)^l D^{k-l} h  \sum\limits_{i \geq \frac{k+l}{2}}^k  \frac{(-1)^{k+l-i} k!(k+l)! 2^{2i}}{(k-i)! (2i-k-l)! (k+l-i)!} A^{2i-k-l} B^{k-i} \right ]_{1-|x|}^{1+|x|} \\
    & \qquad +  (-1)^k \frac{C}{|x|^{2k+1}} \int\limits_{1-|x|}^{1+|x|} \PD_t h \lb \frac{(-1)^k 2^{2k}k! (2k)! }{k!}\rb \, \D t. 
\end{align*}
Since $B(x, 1 \pm |x|) = 0$, only the $i=k$ term survives in the first expression on the right to give
\begin{align*}
    P(D^{n-2}h)(x) &= \frac{C}{|x|^{2k+1}} \left [ \sum\limits_{l=0}^{k-1} \frac{k! (k+l)! 2^{2k}}{(k-l)! l!} A^{k-l} D^{k-l} h  \right]_{1-|x|}^{1+|x|} \\
    &\qquad + \frac{C}{|x|^{2k+1}} 2^{2k}(2k)!  [h]_{1-|x|}^{1+|x|}.
\end{align*}
Writing it out, we have
\Beq
\begin{aligned}\label{RC-simplified}
    P(D^{n-2}h)(x) &= \frac{C}{ |x|^{2k+1}} \Bigg [ \sum\limits_{l=0}^{k} \frac{(-1)^{k-l} 2^{k-l} k! (k+l)!}{(k-l)! l!} |x|^{k-l} \left [D^{k-l} h \right ](1+|x|) \\
    & \quad \hspace{2cm} \quad - \sum\limits_{l=0}^{k} \frac{ 2^{k-l} k! (k+l)!}{(k-l)! l!} |x|^{k-l} \left [D^{k-l} h \right ](1-|x|) \Bigg ].
    \end{aligned}
    \Eeq
    In other words,
    \Beq
    P(D^{n-2}h)(x) = \frac{C}{|x|^{2k+1}} \lb [\Lc_k h](1+|x|) - [\Lc_k h](1-|x|) \rb,
\Eeq
where $\Lc_k$ is the linear differential operator of order $k$, defined by
\[
\Lc_k = \sum\limits_{l=0}^k \frac{ (k+l)! }{(k-l)! l! 2^l} (1-\cdot)^{k-l} D^{k-l}.
\]
Using Theorem \ref{RangeCharacterization-General}, we observe that $g\in \operatorname{Range}(\mathcal{M})$ if and only if $P(D^{n-2}t^{n-2}g)=0$. This concludes the proof of radial case.
\epr

Next we will prove the general case, that is, Theorem \ref{RangeCha-General}. We will need the following three lemmas. 

\begin{lemma}\label{FTC}
    Let $U \in C_c^\infty((0,2))$. Then 
    \[
    \int\limits_0^2 s^{2j+1} U(s) \, \D s =0  \mbox{ for all } 0 \leq j \leq n
    \]
    if and only if $U =  D^{n+1} V$ for some $V \in C_c^\infty((0,2))$.
\end{lemma}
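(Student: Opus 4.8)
The plan is to eliminate the weighted derivative $D=\tfrac1t\tfrac{\D}{\D t}$ by the substitution $r=t^{2}/2$, reducing the statement to the classical characterization of $(n+1)$-st antiderivatives of compactly supported smooth functions. Writing $\wt U(r)=U(\sqrt{2r})$, the map $t\mapsto r=t^{2}/2$ is a smooth diffeomorphism of $(0,2)$ onto itself and carries $C_c^\infty((0,2))$ onto $C_c^\infty((0,2))$; smoothness causes no trouble since everything is supported away from the endpoints. From $U(t)=\wt U(t^{2}/2)$ one gets $(DU)(t)=\wt U{}'(t^{2}/2)$, so $U=D^{n+1}V$ with $V\in C_c^\infty((0,2))$ if and only if $\wt U=\wt V{}^{(n+1)}$ with $\wt V\in C_c^\infty((0,2))$. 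The same substitution gives $\int_0^2 s^{2j+1}U(s)\,\D s=2^{j}\int_0^2 r^{j}\wt U(r)\,\D r$, so the moment conditions on $U$ are equivalent to $\wt U$ being $L^{2}((0,2))$-orthogonal to every polynomial of degree $\le n$. Hence it suffices to prove: $\wt U\in C_c^\infty((0,2))$ is orthogonal to all polynomials of degree $\le n$ if and only if $\wt U=\wt V{}^{(n+1)}$ for some $\wt V\in C_c^\infty((0,2))$.

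For the direction ``$\Leftarrow$'', if $\wt U=\wt V{}^{(n+1)}$ with $\wt V$ compactly supported, integrating by parts $n+1$ times kills all boundary terms and replaces $r^{j}$ by $\tfrac{\D^{n+1}}{\D r^{n+1}}r^{j}\equiv 0$ for $j\le n$, so all moments up to order $n$ vanish. For ``$\Rightarrow$'' I would induct on $n$. Put $\wt V_1(r)=\int_0^r\wt U(s)\,\D s$: it vanishes near $0$ because $\wt U$ does, and it vanishes near $2$ because the zeroth moment of $\wt U$ vanishes and $\wt U$ is supported away from $2$; hence $\wt V_1\in C_c^\infty((0,2))$, which settles the base case $n=0$. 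In general, integration by parts gives $\int_0^2 r^{j}\wt V_1(r)\,\D r=-\tfrac{1}{j+1}\int_0^2 r^{j+1}\wt U(r)\,\D r$, which vanishes for $0\le j\le n-1$ by hypothesis; so $\wt V_1$ is orthogonal to all polynomials of degree $\le n-1$, and the inductive hypothesis yields $\wt V\in C_c^\infty((0,2))$ with $\wt V{}^{(n)}=\wt V_1$, whence $\wt V{}^{(n+1)}=\wt V_1{}'=\wt U$.

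I do not expect a genuine obstacle. The only points requiring a little care are the bookkeeping in the induction --- confirming that exactly the right moment conditions are consumed at each stage so that every successive antiderivative stays compactly supported in $(0,2)$ --- and checking cleanly that $r=t^{2}/2$ intertwines $D$ with $\D/\D r$ and preserves $C_c^\infty$; both are routine.
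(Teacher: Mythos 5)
Your proof is correct, but it is organized differently from the paper's. You first linearize the operator $D=\tfrac1t\tfrac{\D}{\D t}$ via the substitution $r=t^2/2$ (which indeed maps $(0,2)$ diffeomorphically onto itself and turns the odd moments $\int_0^2 s^{2j+1}U(s)\,\D s$ into the ordinary moments $2^j\int_0^2 r^j\wt U(r)\,\D r$), and then prove the classical statement that a function in $C_c^\infty((0,2))$ is an $(n+1)$-st derivative of a $C_c^\infty$ function iff its moments of order $\le n$ vanish, by a ``bottom-up'' induction: integrate once to get $\wt V_1$, check that $\wt V_1$ is compactly supported and has one fewer vanishing moment, and apply the inductive hypothesis to $\wt V_1$. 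The paper instead works directly with $D$ and the weighted moments, and runs the induction ``top-down'': it applies the inductive hypothesis to $U$ itself to write $U=D^{n+1}V$, then uses the extra moment condition $\int_0^2 s^{2n+3}D^{n+1}V\,\D s=0$ together with repeated integration by parts (each step trading $s^{2j+1}$ against one $D$) to deduce $\int_0^2 sV\,\D s=0$ and hence $V=D\wt V$. The two inductions consume the moment conditions in opposite orders but are of essentially equal length; your change of variables buys a reduction to a textbook fact and makes the bookkeeping transparent, while the paper's version avoids introducing the auxiliary variable and stays in the notation ($D$, odd moments) used throughout the rest of the argument. Both handle the compact-support issues correctly, so there is no gap.
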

\bpr
We use induction on $n$. Assume $n=0$. The ``if" part is obvious. Let us consider the ``only if" part. Since 
    \[
    \int\limits_0^2 s\, U(s) \, \D s = 0,
    \]
    the function $V(t) = \int\limits_{0}^t s\, U(s) \, \D s$ is compactly supported, smooth, and satisfies $DV=U$. 

    Next, let us assume the result is true for all $0\leq j\leq n$. As in the base case, the ``if'' part of the induction step is straightforward, so we prove the ``only if'' part here. Suppose
    \[
    \int\limits_0^2 s^{2j+1}U(s)=0 \mbox{ for all } 0\leq j\leq n+1.
    \]
    From the induction assumption, it follows that 
    \[
    U=D^{n+1} V \mbox{  for some } V\in C_c^{\infty}((0,2)).
    \]
         Combining the last two relations with $j=n+1$ leads to
    \[
    \int\limits_0^2 s^{2n+3} D^{n+1} V(s) \D s =0.
    \]
    Applying repeated integration by parts yields
    \[
    \int\limits_0^2 s V(s)\D s=0,
    \]
    which implies that $V=D\wt{V}$ for some $\wt{V}\in C_c^{\infty}((0,2))$, i.e.
    $U=D^{n+2} \wt{V} \mbox{ for } \wt{V}\in C_c^{\infty}((0,2)).$
\epr
\begin{lemma} Let $B(r,u)=4r^2-(1+r^2-u^2)^2$.
    Then for any $m \geq 1$,  $B^m$ can be expressed as 
    \[
    B^m(r,u) = \sum\limits_{j=0}^{2m} q_{j,2m}(u^2) r^{2j},
    \]
    where $q_{j,2m}$ is a polynomial of degree exactly $2m-j$.
\end{lemma}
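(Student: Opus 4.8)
The plan is to strip the statement down to the combinatorics of $B$ viewed as a polynomial in the two variables $\rho:=r^2$ and $s:=u^2$. First I would rewrite
\[
B(r,u)=4r^2-(1+r^2-u^2)^2=-\rho^2+2(1+s)\rho-(1-s)^2,
\]
which exhibits $B$ as an element $\widetilde B(\rho,s)\in\mathbb{R}[\rho,s]$ of total degree $2$, whose coefficient of $\rho^2$, $\rho^1$, $\rho^0$ is a polynomial in $s$ of degree $0,1,2$ respectively; this is precisely the case $m=1$ of the claim. The point I would then isolate is the top-degree homogeneous part: the degree-$2$ component of $\widetilde B$ is
\[
\widetilde B_2(\rho,s)=-\rho^2+2\rho s-s^2=-(\rho-s)^2.
\]

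Next I would invoke the elementary multiplicativity of leading forms. For $P,Q\in\mathbb{R}[\rho,s]$ with top homogeneous parts $P_d,Q_e$ (of degrees $d=\deg P$, $e=\deg Q$), the degree-$(d+e)$ component of $PQ$ is exactly $P_dQ_e$, and this is nonzero because $\mathbb{R}[\rho,s]$ is an integral domain; hence $\deg(PQ)=d+e$ and the top form of $PQ$ is $P_dQ_e$. Iterating, the top homogeneous part of $\widetilde B^{\,m}$ is $\bigl(\widetilde B_2\bigr)^m=(-1)^m(\rho-s)^{2m}$, so $\widetilde B^{\,m}$ has total degree exactly $2m$, and by the binomial theorem its degree-$2m$ part is
\[
(-1)^m(\rho-s)^{2m}=\sum_{j=0}^{2m}(-1)^{m-j}\binom{2m}{j}\,\rho^j s^{2m-j}.
\]

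From here the conclusion is immediate. Writing $\widetilde B^{\,m}=\sum_{j=0}^{2m}q_{j,2m}(s)\,\rho^j$, the total-degree-$2m$ bound forces $\deg q_{j,2m}\le 2m-j$; and since the coefficient $(-1)^{m-j}\binom{2m}{j}$ of the corner monomial $\rho^j s^{2m-j}$ is nonzero for every $0\le j\le 2m$, in fact $\deg q_{j,2m}=2m-j$ exactly. Substituting back $\rho=r^2$, $s=u^2$ gives $B^m(r,u)=\sum_{j=0}^{2m}q_{j,2m}(u^2)\,r^{2j}$ with $\deg q_{j,2m}=2m-j$. I do not expect a genuine obstacle here: the only idea needed is to change variables to $r^2,u^2$ and notice that the leading form is the perfect square $-(\rho-s)^2$, after which multiplicativity of leading forms makes the induction on $m$ essentially automatic. (If one preferred to avoid leading forms altogether, a direct induction on $m$ via $B^{m+1}=B\cdot B^m$, tracking only the two extreme coefficients $q_{0,\cdot}$ and $q_{2m,\cdot}$ together with the total-degree bound, would also work, but it is messier.)
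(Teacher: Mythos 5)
Your proof is correct, but it takes a genuinely different route from the paper's. You pass to the variables $\rho=r^2$, $s=u^2$, observe that the top homogeneous part of $\widetilde B(\rho,s)=-\rho^2+2(1+s)\rho-(1-s)^2$ is the perfect square $-(\rho-s)^2$, and then use multiplicativity of leading forms in the integral domain $\mathbb{R}[\rho,s]$ to conclude that the degree-$2m$ part of $\widetilde B^{\,m}$ is $(-1)^m(\rho-s)^{2m}$; the total-degree bound gives $\deg q_{j,2m}\le 2m-j$ and the nonzero corner coefficients $(-1)^{m-j}\binom{2m}{j}$ give equality. The paper instead expands $B^m$ explicitly via the multinomial theorem over triples $i+j+k=m$, regroups in powers of $r^2$, and reads off a closed-form expression for each $q_{j,2m}$, whose leading coefficient is a sum of nonnegative terms (hence nonzero). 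Your argument is shorter and makes the nonvanishing of the leading coefficient transparent (a single binomial coefficient, consistent with the paper's sum via a Vandermonde-type identity), at the cost of not producing explicit coefficient formulas; since the paper only ever uses the degree statement (in the lemma that follows, where it triangularizes the moment conditions), nothing is lost. Both proofs are valid.
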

\bpr
By regrouping the terms of $B(r,u)$ and applying the multinomial theorem we get
\[
\begin{aligned} 
B^{m}(r,u)& =\sum\limits_{i+j+k=m} \frac{m!}{i! j! k !} (-1)^{i} r^{4i} 2^{j}r^{2j}(1+u^2)^{j}(-1)^{k}(1-u^2)^{2k}\\
&=\sum\limits_{i+j+k=m} \frac{2^{j}m!}{i! j! k !} (-1)^{i+k} r^{4i+2j} (1+u^2)^{j}(1-u^2)^{2k}.
\end{aligned} 
\] 
One can rearrange the terms of the above sum in the increasing order of powers of $r^2$ to obtain
\[
\begin{aligned} 
B^{m}(r,u) &=(-1)^m\sum\limits_{\A=0}^{2m}(-2)^{\A} r^{2\A} \sum\limits_{i=0}^{[\A/2]} \frac{m!}{2^{2i} i! (\A-2i)!(m+i-\A)!}(1+u^2)^{\A-2i}(1-u^2)^{2m+2i-2\A}\\
&=(-1)^m\sum\limits_{\A=0}^{2m}(-2)^{\A} r^{2\A} \sum\limits_{i=0}^{[\A/2]} \frac{1}{4^i}{m\choose \A-2i}{m-\A+2i\choose m+i-\A} (1+u^2)^{\A-2i}(1-u^2)^{2m+2i-2\A} \\
&=\sum\limits_{\A=0}^{2m} q_{\A,2m}(u^2) r^{2\A},
\end{aligned} 
\]
where
\[
q_{\A,2m}(u^2) = (-1)^m (-2)^{\A} \sum\limits_{i=0}^{[\A/2]} \frac{1}{4^i} \binom{m}{\A-2i} \binom{m-\A+2i}{m+i-\A} (1+u^2)^{\A-2i} (1-u^2)^{2m+2i-2\A}.
\]
Observe that $q_{\A,2m}$ is a polynomial of degree $2m-\A$, where the coefficient of the highest degree term is $(-1)^m (-2)^{\A} \sum\limits_{i=0}^{[\A/2]} \frac{1}{4^i}{m\choose \A-2i}{m-\A+2i\choose m+i-\A}$, which is non-zero. This finishes the proof. \epr

\begin{lemma} \label{FTC2}
    Let $\ve>0$ and $h$ be a function such that 
    \[
    \int\limits_\epsilon^{2-\epsilon} u h(u) B^k(r,u) \, \D u = 0, \mbox{ for some }k \geq 1 \mbox{ and all } r \in (1-\epsilon, 1).
    \]
    Then,
    \[
    \int\limits_{\epsilon}^{2-\epsilon} u^{2j+1} h(u) \, \D u = 0, \quad \text{for all} \quad 0 \leq j \leq 2k.
    \]
\end{lemma}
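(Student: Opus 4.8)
The plan is to combine the explicit polynomial structure of $B^k$ furnished by the previous lemma with a triangular linear-algebra argument. First I would invoke that lemma with $m=k$ to write
\[
B^k(r,u) = \sum_{j=0}^{2k} q_{j,2k}(u^2)\, r^{2j},
\]
where each $q_{j,2k}$ is a polynomial of degree exactly $2k-j$. Substituting this expansion into the hypothesis and interchanging the finite sum with the integral gives, for every $r\in(1-\epsilon,1)$,
\[
0=\int\limits_{\epsilon}^{2-\epsilon} u\,h(u)\,B^k(r,u)\,\D u = \sum_{j=0}^{2k}\left(\int\limits_{\epsilon}^{2-\epsilon} u\,h(u)\,q_{j,2k}(u^2)\,\D u\right) r^{2j}.
\]
The right-hand side is a polynomial in $r$ that vanishes on a nonempty open interval, hence it is the zero polynomial, so each of its coefficients vanishes:
\[
\int\limits_{\epsilon}^{2-\epsilon} u\,h(u)\,q_{j,2k}(u^2)\,\D u = 0,\qquad 0\le j\le 2k.
\]

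Next I would introduce the moments $M_i:=\int_{\epsilon}^{2-\epsilon} u^{2i+1} h(u)\,\D u$ for $0\le i\le 2k$ and translate the relations above into a linear system for the vector $(M_0,\dots,M_{2k})$. Writing $q_{j,2k}(u^2)=\sum_{i=0}^{2k-j} a_{j,i}\,u^{2i}$ with $a_{j,\,2k-j}\neq 0$ (this is exactly where the sharp ``degree exactly $2k-j$'' is used), the $j$-th relation becomes $\sum_{i=0}^{2k-j} a_{j,i}M_i=0$. Ordering these equations by decreasing $j$ gives a triangular system: the equation with $j=2k$ reads $a_{2k,0}M_0=0$, forcing $M_0=0$; and assuming inductively $M_0=\dots=M_{i-1}=0$, the equation with $j=2k-i$ collapses to $a_{2k-i,\,i}M_i=0$, forcing $M_i=0$. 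By induction $M_i=0$ for all $0\le i\le 2k$, which is precisely the claimed conclusion.

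There is no serious obstacle in this argument; the only points requiring a word of care are (i) the elementary fact that a polynomial in $r$ vanishing on an open interval must vanish identically, used to peel off the relations indexed by $j$, and (ii) the necessity of the \emph{sharp} degree statement from the preceding lemma, which guarantees that the leading coefficients $a_{j,\,2k-j}$ are nonzero and hence that the resulting system is genuinely triangular and uniquely solvable by back-substitution.
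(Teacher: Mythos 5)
Your proposal is correct and follows essentially the same route as the paper: expand $B^k$ via the preceding lemma, conclude that each coefficient of the resulting polynomial in $r$ vanishes, and then use the fact that $\deg q_{j,2k}=2k-j$ exactly to extract the moments by back-substitution starting from $j=2k$. Your write-up merely makes the paper's phrase ``starting with $j=2k$ and working backwards'' explicit as a triangular linear system.
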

\begin{proof}
    Note that the given quantity vanishes as a polynomial in $r$, hence each of its coefficients must also vanish. Therefore, using the previous lemma,
    \[
    \int\limits_{\epsilon}^{2-\epsilon} q_{j,2k} (u^2) u h(u) \D u = 0,  \quad \text{for each} ~~ 0 \leq j \leq 2k.
    \]
    Since $q_{j,2k}$ is a polynomial of degree \emph{exactly} $2k-j$, (starting with $j=2k$ and working backwards) we have 
    \[
    \int\limits_{\epsilon}^{2-\epsilon} \lb u^2\rb^{2k-j} u h(u) ~\D u = 0, \mbox{ for each } 0 \leq j \leq 2k.
    \]
    The proof is complete.
\end{proof}
\bpr[Proof of Theorem \ref{RangeCha-General}]
Let us expand $g$ in spherical harmonics: 
\[
g(\theta,t)=\sum\limits_{m=0}^{\infty}\sum\limits_{l=1}^{d_{m}} g_{ml}(t) Y_{ml}(\theta),
\]
with $g_{ml}\in C_c^{\infty}((0,2))$.
Define $h(\theta,t)=t^{n-2}g(\theta, t)$, and 
$h_{ml}(t)=t^{n-2}g_{ml}(t)$. Then
\[
[D^{n-2}h](\theta,t)=\sum\limits_{m=0}^{\infty}\sum\limits_{l=1}^{d_{m}} D^{n-2} h_{ml}(t) Y_{ml}(\theta),
\]
where $D=\frac{1}{t}\frac{\D}{\D t}$. Applying the backprojection operator to the above expression, one gets
\[
\begin{aligned}
P(D^{n-2} h)(x)&=\sum\limits_{m=0}^{\infty}\sum\limits_{l=1}^{d_m}\int\limits_{\Sb^{n-1}}[D^{n-2}h_{ml}](|x-\theta|) Y_{ml}(\theta) \D S(\theta)\\
&=\sum\limits_{m=0}^{\infty}\sum\limits_{l=1}^{d_{m}}\int\limits_{\Sb^{n-1}} [D^{n-2}h_{ml}]\lb \sqrt{|x|^2+1-2x\cdot \theta}\rb Y_{ml}(\theta)\D S(\theta)\\
&=C \sum\limits_{m=0}^\infty \sum\limits_{l=1}^{d_m} \Bigg{\{} \int\limits_{-1}^1 [D^{n-2} h_{ml}] \lb \sqrt{1+|x|^2-2|x|t} \rb C_m^{\frac{n-2}{2}} (t) \lb 1-t^2 \rb^{(n-3)/2} \D t \Bigg{\}} Y_{ml}\lb\frac{x}{|x|}\rb,
\end{aligned} 
\]
where in the last step, we used the Funk-Hecke theorem. Also recall that $C_m^{\A}(t)$ are the Gegenbauer polynomials defined by
\[
C_m^{\A}(t)=K_m (1-t^2)^{-\A+\frac{1}{2}}\frac{\D^{m}}{\D t^{m}}\lb 1-t^{2}\rb^{m+\A-\frac{1}{2}},
    \]
    where 
    \[
    K_m=\frac{(-1)^{m} \Gamma(\A + \frac{1}{2})\Gamma(m+2\A)}{2^m m!\Gamma(2\A)\Gamma(m+\A+\frac{1}{2})}.
    \]
   % \dbc{perhaps we should write $K(m)$ and then keep track of it below?}
    %\vkc{I have added $K_m$ below. Is it OK?}
    %\dbc{Looks OK to me.}
    Substituting this above, and letting $|x|=r$, we get, 
    \[
    P(D^{n-2}h)(x)=\frac{C}{r}\sum\limits_{m=0}^\infty K_m\sum\limits_{l=1}^{d_m} \Bigg{\{} \int\limits_{1-r}^{1+r}u[D^{n-2}h_{ml}](u)\Big{\{}\frac{\D^{m}}{\D t^{m}}\lb 1-t^{2}\rb^{m+\frac{n-3}{2}}\Big{\}}\Big{|}_{t=\frac{1+r^2-u^2}{2r}}\D u\Bigg{\}}Y_{ml}\lb\frac{x}{r}\rb.
    \]
    A simple application of Fa\'a di Bruno formula gives  
    \[
    (-r)^{m}D_u^{m}\lb 1-\lb \frac{1+r^2-u^2}{2r}\rb^2\rb^{m+\frac{n-3}{2}}=\frac{\D^{m}}{\D t^{m}}\lb 1-t^{2}\rb^{m+\frac{n-3}{2}}\Big{\}}\Big{|}_{t=\frac{1+r^2-u^2}{2r}}.
    \]
    Then 
     \Beq\label{IntegralRange_General}
     \begin{aligned}
    P(D^{n-2}h)(x)&=C\sum\limits_{m=0}^\infty K_m\sum\limits_{l=1}^{d_m} r^{m-1}\Bigg{\{} \int\limits_{1-r}^{1+r}u[D^{n-2}h_{ml}](u)D_u^{m}\lb 1-\lb \frac{1+r^2-u^2}{2r}\rb^2\rb^{m+\frac{n-3}{2}}\D u\Bigg{\}}Y_{ml}\lb\frac{x}{r}\rb\\
    &=\frac{C}{r^{2k+1}}\sum\limits_{m=0}^\infty K_m\sum\limits_{l=1}^{d_m} \frac{1}{r^{m}}\Bigg{\{} \int\limits_{1-r}^{1+r}u[D^{n-2}h_{ml}](u)D_u^{m}\lb 4r^2-(1+r^2-u^2)^2\rb^{m+\frac{n-3}{2}}\D u\Bigg{\}}Y_{ml}\lb\frac{x}{r}\rb.
    \end{aligned} 
    \Eeq
    Now suppose $g\in \operatorname{Range}(\mathcal{M})$. Then by Theorem \ref{RangeCharacterization-General}, $h_{ml}=D^{m}\phi_{ml}$ for $\phi_{ml}\in C_c^{\infty}((0,2))$ and $\phi_{ml}$ satisfies \eqref{GRC}. Substituting $h_{ml}=D^{m}\phi_{ml}$ into \eqref{IntegralRange_General}, one gets 
    \[
    P(D^{n-2}h)(x)=\frac{C}{r^{2k+1}}\sum\limits_{m=0}^\infty K_m\sum\limits_{l=1}^{d_m}\frac{1}{r^{m}} \Bigg{\{} \int\limits_{1-r}^{1+r}u[D^{m+n-2}\phi_{ml}](u)D^{m}\lb 4r^2-(1+r^2-u^2)^2\rb^{m+\frac{n-3}{2}}\D u\Bigg{\}}Y_{ml}\lb\frac{x}{r}\rb.
    \]
    As before, we can transfer $D$ derivatives $k$ times in integration by parts without picking boundary terms. Then
    \[
     P(D^{n-2}h)(x)=\frac{C}{r^{2k+1}}\sum\limits_{m=0}^\infty K_m\sum\limits_{l=1}^{d_m}\frac{1}{r^{m}} \Bigg{\{} \int\limits_{1-r}^{1+r}u[D^{m+k+1}\phi_{ml}](u)D^{m+k}\lb 4r^2-(1+r^2-u^2)^2\rb^{m+k}\D u\Bigg{\}}Y_{ml}\lb\frac{x}{r}\rb.
    \]
    The expression on the right is exactly as in \eqref{Eq:RangeCh-Radial} with $k$ replaced by $m+k$. Hence if $\phi_{ml}$ satisfies \eqref{GRC} for each $m,l$, we have that $P(D^{n-2}h)(x)=0$.

    Let us now show the other implication. Suppose $P(D^{n-2}h)(x)=0$. Then proceeding as before, we have from \eqref{IntegralRange_General},
    \[
    \int\limits_{1-r}^{1+r}u[D^{2k+1}h_{ml}](u)D^{m}\lb 4r^2-(1+r^2-u^2)^2\rb^{m+k}\D u=0.
    \]
Let $\epsilon > 0$ be such that $\mbox{supp}(h_{ml})\subset [\ve, 2-\ve]$. Choosing $r$ to be $1-\ve'$ for any $\ve'<\ve$ and integrating by parts $m$ times, one gets
    \[
    \int\limits_{0}^{2}u[D^{m+2k+1}h_{ml}](u)\lb 4(1-\ve')^2-(1+(1-\ve')^2-u^2)^2\rb^{m+k}\D u=0.
    \]
    The expression on the left is a polynomial in $\ve'$. Since this polynomial vanishes for all $\ve'$ in an interval, using Lemma \ref{FTC2}, we obtain: 
\[
    \int\limits_0^2 u^{2j+1} D^{m+2k+1}h_{ml}(u)=0 \mbox{ for all } 0\leq j\leq 2m+2k.
    \]
    Therefore, as a consequence of Lemma \ref{FTC}, one can conclude that $D^{m+2k+1}h_{ml}= D^{2m+2k+1}\phi_{ml}$ for some $\phi_{ml}\in C_c^{\infty}((0,2))$. On the space of compactly supported functions, the operator $D$ has a zero kernel, and therefore  
    \[
    h_{ml}(u)=D^{m} \phi_{ml}.
    \]
    Substituting this in \eqref{IntegralRange_General}, and integrating by parts as before, one can see that $\phi_{ml}$ satisfies \eqref{GRC}. Thus, $g \in \mathrm{Range}(\mathcal{M})$ as a consequence of Theorem \ref{RangeCharacterization-General}.
\epr
Next we use Theorem \ref{RangeCha-General} to prove Theorem \ref{Kernel-backprojection}. 
As before, we start with functions $g\in C_c^{\infty}(\Sb^{n-1}\times (0,2))$ independent of the angular variable. 
\begin{theorem}\label{Thm:RC-Radial} A function $g\in C_c^{\infty}((0,2))$ is such that $g\in \operatorname{Ker}(P)$ if and only if  $g\in \operatorname{Range}(D^{n-2}t^{n-2}\mathcal{M})$.
\end{theorem}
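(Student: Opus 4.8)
The plan is to reduce Theorem~\ref{Thm:RC-Radial} entirely to the radial range characterization of Theorem~\ref{Thm:EquivRange}, together with the elementary surjectivity-type statements in Lemmas~\ref{FTC} and~\ref{FTC2}. The key observation is that the Funk--Hecke-plus-substitution computation carried out in the proof of Theorem~\ref{Thm:EquivRange} applies verbatim to $P$ acting on an \emph{arbitrary} radial $g\in C_c^\infty((0,2))$, not merely on $D^{n-2}(t^{n-2}g)$: writing $k=(n-3)/2$ and $r=|x|$, one obtains for $x\in\Bb$
\[
Pg(x)=\frac{C}{r^{n-2}}\int\limits_{1-r}^{1+r} u\, g(u)\, B^{k}(r,u)\,\D u,\qquad B(r,u)=4r^2-(1+r^2-u^2)^2 .
\]
Thus membership of $g$ in $\operatorname{Ker}(P)$ is an integral constraint of exactly the type analyzed by Lemmas~\ref{FTC} and~\ref{FTC2}.

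First I would dispose of the ``if'' direction, which is immediate: if $g=D^{n-2}(t^{n-2}\mathcal{M}f)$ for some radial $f\in C_c^\infty(\Bb)$, set $\psi=\mathcal{M}f\in C_c^\infty((0,2))$; since $\psi\in\operatorname{Range}(\mathcal{M})$, Theorem~\ref{Thm:EquivRange} applied to $\psi$ gives $P(D^{n-2}t^{n-2}\psi)=0$, i.e.\ $Pg=0$. For the ``only if'' direction, assume $Pg\equiv0$ on $\Bb$ and fix $\epsilon>0$ with $\operatorname{supp}g\subset[\epsilon,2-\epsilon]$. For $r\in(1-\epsilon,1)$ the interval $(1-r,1+r)$ contains $[\epsilon,2-\epsilon]$, so the displayed identity forces $\int_\epsilon^{2-\epsilon}u\,g(u)B^k(r,u)\,\D u=0$ for all such $r$. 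When $n\ge5$ (so $k\ge1$), Lemma~\ref{FTC2} converts this into the vanishing moments $\int_0^2 u^{2j+1}g(u)\,\D u=0$ for $0\le j\le 2k=n-3$; when $n=3$ the identity reads $Pg(x)=\tfrac{C}{r}\int_{1-r}^{1+r}u\,g(u)\,\D u$ and directly yields $\int_0^2 u\,g(u)\,\D u=0$. In either case Lemma~\ref{FTC}, with its free parameter taken to be $n-3$, produces $w\in C_c^\infty((0,2))$ with $g=D^{n-2}w$. Since $t^{n-2}$ is smooth and non-vanishing on $(0,2)$, the function $\psi:=t^{-(n-2)}w$ again lies in $C_c^\infty((0,2))$ and satisfies $g=D^{n-2}(t^{n-2}\psi)$. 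Finally $P(D^{n-2}t^{n-2}\psi)=Pg=0$, so Theorem~\ref{Thm:EquivRange} gives $\psi=\mathcal{M}f$ for some radial $f\in C_c^\infty(\Bb)$, whence $g=D^{n-2}(t^{n-2}\mathcal{M}f)\in\operatorname{Range}(D^{n-2}t^{n-2}\mathcal{M})$.

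I do not expect a genuine obstacle here, since the substantive content is already contained in Theorem~\ref{Thm:EquivRange}. The only steps requiring care are, in the ``only if'' direction, verifying that the constraint $Pg=0$ forces precisely the $n-2$ moment conditions needed to write $g=D^{n-2}w$ (this is where the explicit $B^k$-kernel and Lemma~\ref{FTC2} enter, with the degenerate case $n=3$, $k=0$, handled by hand), and observing that dividing by $t^{n-2}$ keeps us inside $C_c^\infty((0,2))$ so that Theorem~\ref{Thm:EquivRange} may be re-applied to $\psi$. Once Theorem~\ref{Thm:RC-Radial} is established, the general statement Theorem~\ref{Kernel-backprojection} should follow by the same spherical-harmonic decomposition used to upgrade Theorem~\ref{Thm:EquivRange} to Theorem~\ref{RangeCha-General}, applied coefficient-wise to the $g_{ml}$.
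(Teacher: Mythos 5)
Your proposal is correct and follows essentially the same route as the paper: compute $Pg$ via Funk--Hecke and the substitution $u^2=1+r^2-2rt$, use Lemmas~\ref{FTC2} and~\ref{FTC} to extract the $n-2$ moment conditions and write $g=D^{n-2}w$, then divide by $t^{n-2}$ and invoke Theorem~\ref{Thm:EquivRange}. Your explicit treatment of the degenerate case $n=3$ (where $k=0$ falls outside the stated hypothesis of Lemma~\ref{FTC2}) is a small point the paper passes over silently, but otherwise the two arguments coincide.
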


\bpr
From Theorem \ref{Thm:EquivRange}, we know that a function $\wt{g}\in \operatorname{Range}(\mathcal{M})$ if and only if $P(D^{n-2}t^{n-2}\wt{g})=0$. Therefore, $g\in \operatorname{Range}(D^{n-2}t^{n-2}\mathcal{M})$ implies $Pg=0$. 

Let us show the reverse implication. Suppose $g\in C_c^{\infty}((0,2))$ such that $g\in \operatorname{Ker}(P)$. We would like to prove that there exists an $f\in C_c^{\infty}(\Bb)$ such that $g=D^{n-2}t^{n-2}\mathcal{M}f$.

Consider 
\begin{align*}
    Pg(x) &= \frac{1}{\o_n} \int\limits_{\Sn} g(|x-\theta|) \, \D S(\theta) \\
    &= \frac{1}{\o_n} \int\limits_{\Sn} g\lb\sqrt{1+|x|^2-2|x| \theta \cdot \frac{x}{|x|}}\rb \, \D S(\theta).
    \intertext{Applying the Funk-Hecke theorem, and writing all the constants as $C$, we have}
    Pg(x) &= C \int\limits_{-1}^1 g\lb\sqrt{1+|x|^2-2|x| t}\rb \, \lb1-t^2 \rb^{\frac{n-3}{2}} \, \D t.
    \intertext{The substitution $1+|x|^2-2|x|t = u^2$ yields}
    Pg(x) &= \frac{C}{2^{k}|x|^{2k+1}} \int\limits_{1-|x|}^{1+|x|} u g(u) \lb 4|x|^2 - \lb 1+|x|^2-u^2\rb^2 \rb^{k} \, \D u.
\end{align*}
Denoting $|x|=r$, we get
\Beq\label{Eq2.4}
0=Pg(x)=\frac{C}{2^{k}r^{2k+1}} \int\limits_{1-r}^{1+r} u g(u) \lb 4r^2 - \lb 1+r^2-u^2\rb^2 \rb^{k} \, \D u.
\Eeq
%Let us now expand the right-hand side of \eqref{Eq2.4} as a polynomial in $r$. We note that this is a polynomial of degree at most $4k$ in $r$. 
%\gac{I think the discussion of the RHS being a polynomial should be postponed until we get rid of the variable boundaries of integration. Also, there is $r^{2k+1}$ in the denominator. It doesn't affect the conclusions, but we should mention it.}
Next, let $\operatorname{supp}(g)\subset(\ve, 2-\ve)$.  Then for any $\ve'<\ve$, letting $r=1-\ve'$, and using the structure of $\operatorname{supp}(g)$: 
\[
0=\int\limits_{0}^{2} u g(u) \lb 4(1-\ve')^2 - \lb 1+(1-\ve')^2-u^2\rb^2 \rb^{k} \, \D u.
\]
We then have, using Lemma \ref{FTC2},
\[
\int\limits_0^{2} u^{2j+1} g(u)=0 \mbox{ for all } 0\leq j\leq 2k.
\]
Based on Lemma \ref{FTC}, one has
\[
g(u)=D^{2k+1} h(u) \mbox{ for } h\in C_c^{\infty}((0,2)).
\]
Since by assumption, $Pg(x)=0$, we get,
\[
P(D^{2k+1} h(t))=P\lb D^{2k+1} t^{n-2} \lb \frac{1}{t^{n-2}} h(t)\rb\rb=0.
\]
Note that due to the support condition on $h$, $\frac{1}{t^{n-2}} h(t)\in C_c^{\infty}((0,2))$. By Theorem \ref{Thm:EquivRange}, we have that $\frac{1}{t^{n-2}} h(t)\in \operatorname{Range}(\mathcal{M})$. In other words, there exists a radial $f\in C_c^{\infty}(\Bb)$ such that $h(t)=t^{n-2}\mathcal{M}f(t)$. Consequently, $g=D^{2k+1}t^{n-2}\mathcal{M}f(t)$, that is $g\in \operatorname{Range}(D^{n-2}t^{n-2}\mathcal{M})$. This concludes the proof.
\epr
The previous result can be generalized as follows. 
\begin{corollary}
    Let $g \in C_c^\infty((0,2))$. For each $0 \leq l \leq 2k+1$, $g \in \mathrm{Ker}(PD^{2k+1-l})$ if and only if $g \in \mathrm{Range}(D^l t^{n-2} \mathcal{M})$.
\end{corollary}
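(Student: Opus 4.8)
The plan is to reduce the corollary to its $l=2k+1$ case, which is exactly Theorem~\ref{Thm:RC-Radial}, using a single elementary fact about the operator $D$. First I would record that fact: on $C_c^{\infty}((0,2))$ the operator $D=\frac1t\frac{\D}{\D t}$ maps into itself (as $t$ is smooth and nonvanishing on $(0,2)$) and is injective, because $Dh=0$ forces $h$ to be constant and the only compactly supported constant is $0$; hence every power $D^{j}$ is injective on $C_c^{\infty}((0,2))$. I would also note that multiplication by $t^{n-2}$ preserves $C_c^{\infty}((0,2))$, and that for radial $f\in C_c^\infty(\Bb)$ the function $\mathcal{M}f(\cdot,t)$ vanishes for $t$ near $0$ and near $2$, so every $D^{l}(t^{n-2}\mathcal{M}f)$ genuinely lies in $C_c^{\infty}((0,2))$. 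Recall also that $2k+1=n-2$, so $0\le 2k+1-l\le 2k+1$ and $D^{2k+1-l}\circ D^{l}=D^{2k+1}=D^{n-2}$.

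For the ``if'' implication I would start from $g=D^{l}(t^{n-2}\mathcal{M}f)$ for a radial $f\in C_c^\infty(\Bb)$ and apply $D^{2k+1-l}$:
\[
D^{2k+1-l}g = D^{2k+1}\bigl(t^{n-2}\mathcal{M}f\bigr) = D^{n-2}\bigl(t^{n-2}\mathcal{M}f\bigr)\in\operatorname{Range}\bigl(D^{n-2}t^{n-2}\mathcal{M}\bigr).
\]
Theorem~\ref{Thm:RC-Radial} then gives $P\bigl(D^{2k+1-l}g\bigr)=0$, i.e.\ $g\in\operatorname{Ker}(PD^{2k+1-l})$.

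For the ``only if'' implication I would set $\wt g:=D^{2k+1-l}g$, observe that $\wt g\in C_c^{\infty}((0,2))$ and that $P\wt g=0$ by hypothesis, so $\wt g\in\operatorname{Ker}(P)$. Theorem~\ref{Thm:RC-Radial} then yields a radial $f\in C_c^\infty(\Bb)$ with $\wt g=D^{n-2}(t^{n-2}\mathcal{M}f)=D^{2k+1-l}\bigl(D^{l}(t^{n-2}\mathcal{M}f)\bigr)$. Since $g$ and $D^{l}(t^{n-2}\mathcal{M}f)$ both lie in $C_c^{\infty}((0,2))$ and $D^{2k+1-l}$ is injective on that space, cancelling $D^{2k+1-l}$ gives $g=D^{l}(t^{n-2}\mathcal{M}f)\in\operatorname{Range}(D^{l}t^{n-2}\mathcal{M})$.

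The argument is essentially bookkeeping, and I do not expect a genuine obstacle; the one point that must be handled with care is the cancellation of $D^{2k+1-l}$ in the last step, which is legitimate only because $D$ (hence $D^{2k+1-l}$) has trivial kernel on $C_c^{\infty}((0,2))$ — this is why one must check at each stage that the functions involved remain compactly supported in $(0,2)$. Taking $l=2k+1$ recovers Theorem~\ref{Thm:RC-Radial} as the base case.
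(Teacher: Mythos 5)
Your proposal is correct and follows exactly the route the paper intends: the authors skip the proof, noting only that it ``follows as a straightforward consequence of the above proof'' together with the observation that $D^{r}$ has trivial kernel on compactly supported smooth functions, which is precisely the reduction to Theorem~\ref{Thm:RC-Radial} via injectivity of $D^{2k+1-l}$ that you carry out. Your explicit attention to why the relevant functions stay in $C_c^{\infty}((0,2))$ (so that the cancellation is legitimate) is the one point the paper leaves implicit, and you handle it correctly.
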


\noindent This follows as a straightforward consequence of the above proof. One only needs to observe that $D^{r}$, on the space of compactly supported smooth functions, has a trivial kernel. We skip the proof.

Let us now generalize the previous result to the following. The proof follows by minor modifications of Theorems \ref{Thm:RC-Radial} and \ref{RangeCha-General}. Note that the case $l=2k+1$ is precisely Theorem \ref{Kernel-backprojection}.
\begin{theorem}
    Let $g \in C_c^\infty(\Sn \times (0,2))$. For each $0 \leq l \leq 2k+1$, $g \in \mathrm{Ker}(PD^{2k+1-l})$ if and only if $g \in \mathrm{Range}(D^l t^{n-2} \mathcal{M})$.
\end{theorem}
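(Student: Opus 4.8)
The plan is to combine the radial result (Theorem~\ref{Thm:RC-Radial}) with the spherical harmonic decomposition machinery already developed in the proof of Theorem~\ref{RangeCha-General}, exactly as Theorem~\ref{RangeCha-General} itself is bootstrapped from the radial Theorem~\ref{Thm:EquivRange}. First I would expand $g(\theta,t) = \sum_{m,l} g_{ml}(t) Y_{ml}(\theta)$ and set $h_{ml}(t) = t^{n-2} g_{ml}(t)$, so that $PD^{2k+1-l}g = 0$ becomes, after applying Funk--Hecke and the substitution $u = \sqrt{1+r^2-2rt}$ (this is the computation in \eqref{IntegralRange_General}, now with $D^{2k+1-l}$ in place of $D^{n-2} = D^{2k+1}$), a family of scalar identities: for each $(m,l)$,
\[
\int\limits_{1-r}^{1+r} u \, [D^{2k+1-l+m}\phi_{ml}](u)\, D^m\bigl(4r^2 - (1+r^2-u^2)^2\bigr)^{m+k}\,\D u = 0,
\]
where one first has to extract $g_{ml}$ from the angular orthogonality of the $Y_{ml}$. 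The point is that once $D^{2k+1-l}g$ is fed into the integral formula, everything downstream is formally identical to the argument already written, with the single operator $D^{2k+1}$ replaced by $D^{2k+1-l}$.

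The forward direction is the easy one: if $g \in \mathrm{Range}(D^l t^{n-2}\mathcal{M})$, write $g = D^l t^{n-2}\mathcal{M}f$, so $D^{2k+1-l}g = D^{2k+1}t^{n-2}\mathcal{M}f = D^{n-2}t^{n-2}\mathcal{M}f$, and this lies in $\mathrm{Ker}(P)$ by Theorem~\ref{RangeCha-General} (its general, non-radial form). For the reverse direction, assume $PD^{2k+1-l}g = 0$. Mirroring the radial proof: choose $\epsilon$ with $\mathrm{supp}(h_{ml}) \subset [\epsilon, 2-\epsilon]$, set $r = 1-\epsilon'$ for $\epsilon' < \epsilon$, integrate by parts $m$ times to kill the $D^m$ acting on $B^{m+k}$, invoke Lemma~\ref{FTC2} (the integrand is a polynomial in $\epsilon'$ vanishing on an interval) to get $\int_0^2 u^{2j+1} D^{2k+1-l+m}h_{ml}(u)\,\D u = 0$ for $0 \le j \le 2k+2m$, then Lemma~\ref{FTC} to produce $D^{2k+1-l+m}h_{ml} = D^{2m+2k+1-l}\phi_{ml}$ for some $\phi_{ml} \in C_c^\infty((0,2))$, and finally cancel $D^{2m}$ using that $D$ has trivial kernel on compactly supported functions. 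Wait --- I must be careful about which powers cancel: I want to conclude $h_{ml} = D^m \psi_{ml}$ with $\psi_{ml}$ satisfying the symmetry condition \eqref{GRC}. The bookkeeping is: from $D^{2k+1-l+m}h_{ml} = D^{2m+2k+1-l}\phi_{ml}$ and injectivity of $D$ on $C_c^\infty((0,2))$ one gets $h_{ml} = D^m\phi_{ml}$, and then substituting back into the integral identity and integrating by parts as in the last paragraph of the proof of Theorem~\ref{RangeCha-General} shows $\phi_{ml}$ satisfies \eqref{GRC}; hence $g \in \mathrm{Range}(\mathcal{M})$ by Theorem~\ref{RangeCharacterization-General}, i.e. $g = \mathcal{M}f$. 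But we need more: $g \in \mathrm{Range}(D^l t^{n-2}\mathcal{M})$.

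Here is where the argument needs the extra step beyond a verbatim copy. Having $PD^{2k+1-l}g = 0$, I would first apply Lemma~\ref{FTC2} and Lemma~\ref{FTC} directly (as in the radial Theorem~\ref{Thm:RC-Radial}, componentwise in $(m,l)$) to write $g_{ml}$ with enough $D$-antiderivatives: the moment conditions on $D^{2k+1-l}g_{ml}$ force $D^{2k+1-l}t^{n-2}g_{ml} = D^{2k+1}t^{n-2}\tilde g_{ml}$ for some $\tilde g_{ml} \in C_c^\infty((0,2))$, equivalently, after stripping $D^{2k+1-l}$ (trivial kernel), $t^{n-2}g_{ml}$ and $D^l t^{n-2}\tilde g_{ml}$ differ by the cancellation $t^{n-2}g_{ml} = D^l t^{n-2}\tilde g_{ml}$; one then checks $PD^{n-2}t^{n-2}\tilde g_{ml} = PD^{2k+1-l}t^{n-2}g_{ml} = 0$, so by Theorem~\ref{RangeCha-General} the reassembled $\tilde g = \sum \tilde g_{ml}Y_{ml}$ lies in $\mathrm{Range}(\mathcal{M})$, say $\tilde g = \mathcal{M}f$, whence $g = D^l t^{n-2}\mathcal{M}f$. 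The main obstacle, and the place to be most careful, is precisely this exponent bookkeeping --- tracking how many powers of $D$ and $t^{n-2}$ one carries while alternately applying Lemmas~\ref{FTC}, \ref{FTC2} and the triviality of $\ker D$ on $C_c^\infty((0,2))$ --- together with the routine but nontrivial check that the convergence of the spherical harmonic series is preserved under these operations (the decay of $g_{ml}$ in $m$ is inherited from smoothness of $g$, and applying finitely many $D$'s and polynomial factors does not destroy it). Once the radial statement is secured, passing to the general case is, as the authors note, a matter of minor modifications: the operators act diagonally on spherical harmonic components, and the estimates are uniform enough in $m$ to sum.
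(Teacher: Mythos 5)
Your overall strategy---expand in spherical harmonics, use Funk--Hecke to reduce $P(D^{2k+1-l}g)=0$ to scalar identities of the form $\int_{1-r}^{1+r}u\,[D^{2k+1-l}g_{ml}](u)\,D_u^m B^{m+k}(r,u)\,\D u=0$, integrate by parts $m$ times, invoke Lemmas~\ref{FTC2} and~\ref{FTC}, cancel powers of $D$ using its injectivity on $C_c^\infty((0,2))$, and feed the result back into Theorem~\ref{RangeCha-General}---is exactly the ``minor modification'' of Theorems~\ref{Thm:RC-Radial} and~\ref{RangeCha-General} that the paper intends (it records no further detail), and your forward direction is correct as stated.

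However, the bookkeeping in your reverse direction, as written, does not check out and needs repair. First, the hypothesis is $P(D^{2k+1-l}g)=0$ with $g$ itself fed into $P$, so the scalar identities involve $D^{2k+1-l}g_{ml}$, not $D^{2k+1-l}h_{ml}$ with $h_{ml}=t^{n-2}g_{ml}$; since multiplication by $t^{n-2}$ does not commute with $D$, the factor cannot be carried along silently, and your key identity ``$t^{n-2}g_{ml}=D^l t^{n-2}\tilde g_{ml}$'' has a spurious $t^{n-2}$ on the left (the correct statement is $g_{ml}=D^l\bigl(t^{n-2}\tilde g_{ml}\bigr)$). Second, Lemma~\ref{FTC} applied to the moment conditions $\int_0^2 u^{2j+1}\,[D^{2k+1-l+m}g_{ml}](u)\,\D u=0$ for $0\le j\le 2m+2k$ yields $D^{2k+1-l+m}g_{ml}=D^{2m+2k+1}\psi_{ml}$, not $D^{2m+2k+1-l}\psi_{ml}$; cancelling then gives $g_{ml}=D^{m+l}\psi_{ml}$ rather than $D^{m}\psi_{ml}$, so your intermediate conclusion ``hence $g\in\mathrm{Range}(\mathcal M)$, i.e.\ $g=\mathcal Mf$'' neither follows from the hypothesis $P(D^{2k+1-l}g)=0$ nor is it true in general for $l<2k+1$ (you rightly abandon that detour, but it should be deleted rather than corrected in passing). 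With the exponents fixed, the argument closes cleanly: set $t^{n-2}\tilde g_{ml}:=D^{m}\psi_{ml}$, note $D^{n-2}\bigl(t^{n-2}\tilde g_{ml}\bigr)=D^{2k+1+m}\psi_{ml}=D^{2k+1-l}g_{ml}$, hence $P\bigl(D^{n-2}t^{n-2}\tilde g\bigr)=P\bigl(D^{2k+1-l}g\bigr)=0$, so Theorem~\ref{RangeCha-General} gives $\tilde g=\mathcal Mf$ and therefore $g=D^l t^{n-2}\mathcal Mf$. In short: right architecture, but the displayed identities as written would not survive a line-by-line verification.
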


Finally, we present a counterexample to Conjecture \ref{C1}. 

\bpr[Proof of Theorem \ref{Thm:RubinConj}]

We construct a non-trivial $g\in C_c^\infty((0,2))$ such that  $g \in \operatorname{Ker}(P D^{n-3} t^{n-2})$, but $g \notin \mathrm{Range}(\mathcal{M})$. This would give a counterexample to the conjecture since $0=P(D^{n-3}t^{n-2}g)\in I^2[C^{\infty}(\Bb)]$ (being the image of the $0$ function), but $g\notin \operatorname{Range}(\mathcal{M})$. 

Consider a non-trivial $\wt{g}\in C_c^{\infty}((0,2))$, such that $\wt{g}\in \operatorname{Range}(\mathcal{M})$. Then, it follows from Theorem \ref{Thm:EquivRange} that $h=t^{n-2}\wt{g}\in \operatorname{Ker}(PD^{n-2})$. Since $n\geq 3$, it implies that $Dh\in \operatorname{Ker}(PD^{n-3})$. A straightforward computation yields
\[
Dh(t)=(n-2)t^{n-4} \wt{g}(t)+ t^{n-3} \wt{g}\,'(t)=t^{n-2}\left[ \frac{(n-2)\wt{g}(t)}{t^2} + \frac{\wt{g}\,'(t)}{t}\right].
\]
Define 
\[
g(t)= \frac{(n-2)\wt{g}(t)}{t^2} + \frac{\wt{g}\,'(t)}{t}.
\]
Note that $g(t)\in C_c^{\infty}((0,2))$ and $g(t)$ is not the zero function. The latter claim follows from the fact that if $g(t)\equiv 0$, then $\wt{g}\,(t)$ solves a homogeneous linear ODE with a zero initial condition, 
implying that $\wt{g}\equiv 0$ and contradicting our choice of $\wt{g}$. We now have 
\[
P(D^{n-3}t^{n-2}g(t))=0.
\]
Suppose there exists an $f\in C_c^{\infty}(\Bb)$ such that $\mathcal{M}f=g$. Then the inversion formula from \cite{Finch-P-R} (see \eqref{inv}) gives  $f\equiv 0$, and hence $g\equiv 0$. This contradicts the fact that $g\not\equiv 0$. Therefore $g\notin \mbox{ Range}(\mathcal{M})$, thus showing that the sufficiency part of Rubin's conjecture is not valid.

In fact, the above argument can be generalized as follows. Consider $ \Tilde{g} = g + \mathcal{M} f$ for $g$ as above and $f \in C_c^\infty(\Bb)$. Then  $\Tilde{g} \in C_c^\infty(\Sn \times (0,2))$, and $P(D^{n-3} t^{n-2} \Tilde{g}) = P(D^{n-3} t^{n-2} \mathcal{M}f)$ belongs to the range $I^2(C_c^\infty(\Bb))$ (by necessity part of Conjecture~\ref{C1}), but $\Tilde{g} \notin \mathrm{Range}(\mathcal{M})$ (for otherwise $g \in \mathrm{Range}(\mathcal{M})$).
\epr

\bibliographystyle{plain}
\bibliography{references}
\end{document}